\newcommand{\refeq}[1]{(\ref{#1})}
 \def\1{\raisebox{2pt}{\rm{$\chi$}}}
\newtheorem{theorem}{Theorem}[section]
\newtheorem{corollary}[theorem]{Corollary}
\newtheorem{lemma}[theorem]{Lemma}
\newtheorem{proposition}[theorem]{Proposition}
\newtheorem{remark}[theorem]{Remark}
\newcommand{\R}{{\mathbb R}}
\newcommand{\RR}{{\mathbb R}}
\newcommand{\N}{{\mathbb N}}
\newcommand{\E}{{\mathbb E\,}}
 \newcommand{\eps}{{\varepsilon}}
 \def\1{\raisebox{2pt}{\rm{$\chi$}}}
\newcommand{\abs}[1]{\left|#1\right|}
\newcommand{\Rn}{\mathbb{R}^n}
\newcommand{\osc}{\operatorname{osc}}
\def\vint_#1{\mathchoice%
          {\mathop{\kern 0.2em\vrule width 0.6em height 0.69678ex depth -0.58065ex
                  \kern -0.8em \intop}\nolimits_{\kern -0.4em#1}}%
          {\mathop{\kern 0.1em\vrule width 0.5em height 0.69678ex depth -0.60387ex
                  \kern -0.6em \intop}\nolimits_{#1}}%
          {\mathop{\kern 0.1em\vrule width 0.5em height 0.69678ex
              depth -0.60387ex
                  \kern -0.6em \intop}\nolimits_{#1}}%
          {\mathop{\kern 0.1em\vrule width 0.5em height 0.69678ex depth -0.60387ex
                  \kern -0.6em \intop}\nolimits_{#1}}}
\def\vintslides_#1{\mathchoice%
          {\mathop{\kern 0.1em\vrule width 0.5em height 0.697ex depth -0.581ex
                  \kern -0.6em \intop}\nolimits_{\kern -0.4em#1}}%
          {\mathop{\kern 0.1em\vrule width 0.3em height 0.697ex depth -0.604ex
                  \kern -0.4em \intop}\nolimits_{#1}}%
          {\mathop{\kern 0.1em\vrule width 0.3em height 0.697ex depth -0.604ex
                  \kern -0.4em \intop}\nolimits_{#1}}%
          {\mathop{\kern 0.1em\vrule width 0.3em height 0.697ex depth -0.604ex
                  \kern -0.4em \intop}\nolimits_{#1}}}
\newcommand{\kint}{\vint}
\newcommand{\aveint}[2]{\mathchoice%
          {\mathop{\kern 0.2em\vrule width 0.6em height 0.69678ex depth -0.58065ex
                  \kern -0.8em \intop}\nolimits_{\kern -0.45em#1}^{#2}}%
          {\mathop{\kern 0.1em\vrule width 0.5em height 0.69678ex depth -0.60387ex
                  \kern -0.6em \intop}\nolimits_{#1}^{#2}}%
          {\mathop{\kern 0.1em\vrule width 0.5em height 0.69678ex depth -0.60387ex
                  \kern -0.6em \intop}\nolimits_{#1}^{#2}}%
          {\mathop{\kern 0.1em\vrule width 0.5em height 0.69678ex depth -0.60387ex
                  \kern -0.6em \intop}\nolimits_{#1}^{#2}}}
\newcommand{\ud}{\, d}
\newcommand{\half}{{\frac{1}{2}}}
\newcommand{\ol }{\overline}
\newcommand{\Om}{\Omega}
\newcommand{\I}{\textrm{I}}
\newcommand{\II}{\textrm{II}}
\newcommand{\dist}{\operatorname{dist}}
\renewcommand{\P}{\mathbb{P\,}}
\newcommand{\F}{\mathcal{F}}
\newcommand{\om}{\omega}
\newcommand{\trm}{\textrm}
\begin{document}

\title[p-harmonic Harnack via stochastic games]{Harnack's inequality for p-harmonic functions via stochastic games}

\author[Luiro]{Hannes Luiro}
\address{Department of Mathematics and Statistics, University of
Jyv\"askyl\"a, PO~Box~35, FI-40014 Jyv\"askyl\"a, Finland}
\email{hannes.s.luiro@jyu.fi}

\author[Parviainen]{Mikko Parviainen}
\address{Department of Mathematics and Statistics, University of
Jyv\"askyl\"a, PO~Box~35, FI-40014 Jyv\"askyl\"a, Finland}
\email{mikko.j.parviainen@jyu.fi}

\author[Saksman]{Eero Saksman}
\address{Department of Mathematics and Statistics, University of
Helsinki, PO~Box~68, FI-00014 Helsinki, Finland}
\email{eero.saksman@helsinki.fi}

\thanks{The third  author was
supported by the Finnish CoE in Analysis and Dynamics Research.}

\date{\today}
\keywords{Harnack inequality, p-harmonic, stochastic games, two-player zero-sum games, dynamic programming principle, Lipschitz estimates} \subjclass[2010]{35J92, 35B65,91A15,35J60,49N70}

\begin{abstract} We give a  proof of Lipschitz continuity of p-harmonious functions, that are tug-of-war game analogies of ordinary p-harmonic functions. This result is used to obtain  a new proof of Harnack's inequality for p-harmonic functions in the case $p>2$ that  avoids classical techniques like Moser iteration, but instead relies on suitable choices of strategies for the stochastic tug-of-war game.

\end{abstract}

\maketitle

\section{Introduction}

Considerable progress was made in the mid-1950s and -l960s in the regularity theory
of elliptic equations due to the discoveries of DeGiorgi \cite{degiorgi57}, Nash \cite{nash58} and Moser \cite{moser60,moser61}.
DeGiorgi and Nash proved that solutions to certain elliptic partial differential equations are H\"older continuous whereas Moser showed
that non-negative solutions satisfy the Harnack inequality. Such an inequality can be
used, in turn, to prove the H\"older continuity of solutions. This development settled Hilbert's 19th problem.

In this paper, we present a completely new and rather straightforward proof for 
the Harnacks's inequality for solutions to $p$-Laplace equation
\[
\begin{split}
{\Delta_p} u=\operatorname{div}(\abs{\nabla u}^{p-2} \nabla u)=0
\end{split}
\]
for $2< p<\infty$. It is worth emphasizing that, quite surprisingly, our method readily  produces Lipschitz continuity estimates.
The proof utilizes a recently discovered connection between stochastic games and $p$-harmonic functions, see \cite{peress08}.   The argument is based on a choice of a strategy, and is thus completely different from the proofs of De Giorgi, Moser, or Nash.

Fix $\eps>0$ and consider the following two-player zero-sum-game from \cite{manfredipr11}. At the beginning, a token is placed at a point $x_0\in
\Om\subset \Rn$ and the players toss a biased coin with probabilities
$\alpha$ and $\beta$, $\alpha + \beta =1$. If they get
heads (probability $\alpha$), they play a tug-of-war, that is, a fair coin is tossed and the winner
of the toss is allowed to move the game position to any
$x_1\in B_\eps (x_0)$. On the other hand, if they get tails
(probability $\beta$), the game state moves according to  uniform
probability (with respect to normalized Lebesgue measure) to a random point in the ball $B_\eps(x_0)$.
Then they continue playing the same game from  $x_1$.
Once the game position reaches the boundary, Player II pays Player I
the amount given by a pay-off function $F$. Naturally, Player I tries to maximize and Player II tries to minimize the expected payoff, and in this way we obtain the value of the game, which is denoted by $u_\varepsilon$. The value of this game approximates a $p$-harmonic function when we choose suitable $\alpha$ and $\beta$ according to $p$ and $n$. The pay-off function is defined in a suitable $\varepsilon$-neighbourhood of the boundary, and it can be thought as the prescribed boundary values of $u_\varepsilon .$ In the limit $\varepsilon\to 0$ the value function $u_\varepsilon$ tends to a $p$-harmonic function with, loosely speaking, boundary values $F.$

We obtain Lipschitz continuity (Theorem~\ref{thm:lipschitz-continuity}), an oscillation estimate (Corollary~\ref{strong-oscillation-estimate}), and Harnack's inequality (Theorem~\ref{thm:pharmoniousharnack}) for game values. Similarly, we obtain an oscillation estimate and Harnack's inequality for $p$-harmonic functions (respectively Corollary~\ref{p-oscillation-estimate} and end of Section~\ref{sec:p-harmonic-harnack}).
 In the case $p>n$, a player can force the game to a point with a positive probability and the proof for Harnack's inequality is even  simpler (Corollary~\ref{cor:harnack-p>n}).

The argument for Theorem~\ref{thm:lipschitz-continuity} is based on a cancellation strategy, in which Player II has two goals: he tries to to cancel the effect of Player I and move to a target direction fixed number of times. If Player II reaches his goals, which occurs with high enough probability, then the expected value has a symmetric component. Finally, we can utilize this symmetry in comparing the values for the games starting from different points.

The classical linear interplay between harmonic functions
and martingales is well known.
In the nonlinear case, a connection of the tug-of-war game to so called infinity harmonic functions was established by Peres, Schramm, Sheffield and Wilson in \cite{peresssw09}, see also \cite{legruyera98}, \cite{oberman05}, and \cite{legruyer07}. This has inspired further studies to many different directions, see for example \cite{peresps10}, \cite{atarb10}, \cite{manfredipr10d} as well as led to simplified  proofs in the theory of PDEs, see for example \cite{armstrongs10}.

\section{Preliminaries}

Let us start by fixing  the basic notation used throughout the work. We denote
\[
B_{{\rho}}(x_0)=\{\,x\in \Rn\, :\, |x-x_0|<{\rho}\,\}.
\]
When no confusion arises, we drop the common center point in statements and denote
 $B_r$, $B_{2r}$ etc.
 The integral average of $u$ is denoted
by
\[
\kint_{B_{\rho}} u(x) \ud
x=\frac{1}{\abs{B_{\rho}}}\int_{B_{\rho}} u(x) \ud x,
\]
where
$\abs{{B_{\rho}}}$ denotes the Lebesgue measure of ${B_{\rho}}$.

Fix $p>2.$ Let  $\Omega\subset\mathbb{R}^{n}$ be a bounded domain and fix $\eps >0$. To prescribe boundary values we introduce
 the compact boundary strip  of width $\eps$  by setting
\[
\begin{split}\Gamma_\eps :=
\{x\in \RR^n \setminus \Om\,:\,\dist(x,\partial \Om) \leq \eps\}.
\end{split}
\]
We next recall  the two-player zero-sum-game called 'tug-of-war with noise'. At the beginning, a token is placed at a point $x_0\in
\Om$ and the players toss a biased coin with probabilities
$\alpha$ and $\beta$, $\alpha + \beta =1$.  Here
\begin{equation}\label{eq:constants}
\alpha=\alpha(p):= \frac{p-2}{n+p}\quad \mbox{and}\quad \beta=\beta(p):=\frac{n+2}{n+p}.
\end{equation}
If they get
heads (probability $\alpha$), they play a tug-of-war, that is, a fair coin is tossed and the winner
of the toss is allowed to move the game position to any
$x_1\in B_\eps (x_0)$. On the other hand, if they get tails
(probability $\beta$), the game state moves according to the uniform
probability to a random point in the ball $B_\eps(x_0)$.
Then they continue playing the same game from  $x_1$.

This procedure yields a  sequence of game states
$x_0,x_1,\ldots$ where  every $x_k$ is a random
variable. We denote by $x_\tau \in \Gamma_\eps$ the first point in
$\Gamma_\eps$ in the sequence, where  $\tau$ refers to the first
time we hit $\Gamma_{\eps}$. The payoff is $F(x_\tau)$, where
$F:\Gamma_\eps
\to \R$ is a given, bounded, Borel measurable
\emph{payoff function}. Player I earns $F(x_\tau)$ while
Player II earns $-F(x_\tau)$.

A history  of a game up to step $k$ is a vector of the first $k+1$
game states $x_0,\ldots,x_k$ and $k$ coin tosses $c_1,\ldots,c_k$, that is,
\begin{equation}
\label{eq:history}
\begin{split}
\left(x_0,(c_1,x_1),\ldots,(c_k,x_{k})\right).
\end{split}
\end{equation}
Here $c_j\in \mathcal C:=\{0,1,2\},$ where $0$ denotes that Player I wins, $1$ that Player II wins and $2$ that a random step occurs.

A strategy
$S_\I$
for Player I is a collection of  Borel-measurable  functions that give the next
game position given the history of the game i.e.\ next move as a function
of all previously played moves and all previous coin tosses. For example
\[
S_\I{\left(x_0,(c_1,x_1),\ldots,(c_k,x_k)\right)}=x_{k+1}\in  B_\eps(x_k)
\]
if Player I wins the toss. Similarly Player II  plays according to a strategy $S_{\II}$.

Let $\Om_\eps=\Om\cup\Gamma_\eps\subset \Rn$. The space of all game sequences (and our probability space) will be
\[
\begin{split}
H^\infty= x_0\times \left({\mathcal C},\Om_\eps\right)\times\ldots.
\end{split}
\]
Writing $\om=\left(x_0,(c_1,x_1),\ldots\right)\in H^\infty$, define the random variable  time $\tau$ by 
\begin{equation}
\label{eq:stopping-time-boundary}
\begin{split}
\tau(\om)=\inf\{k\,:\,x_k\in \Gamma_\eps,k=0,1,\ldots\}.
\end{split}
\end{equation}
This $\tau(\om)$ is a stopping time relative to the filtration $\{\F_k\}_{k=0}^\infty$, where
${\mathcal F}_0:=\sigma (x_0)$ and
\begin{equation}\label{eq:filtration}
{\mathcal F}_k:=\sigma (x_0, (c_1,x_1),\ldots, (c_k,x_k))\quad \mbox{for}\quad k\geq 1.
\end{equation}

The fixed starting point $x_{0}$ and the strategies $S_{\I}$ and
$S_{\II}$ determine a unique probability measure
$\mathbb{P}^{x_0}_{S_I,S_\II}$ on the natural product $\sigma$-algebra. In particular, this measure is defined on the sets of the type $x_0\times \left(C_1, B_1\right)\times\ldots$, where $C_i\subset \mathcal C$ and the $B_i\subset \Om_\eps$  are Borel subsets.
The probabilility measure
is  built by
applying  Kolmogorov's extension theorem to  the family of
transition probabilities  (compare to \cite[Section 2]{manfredipr11})
 \begin{equation}
\label{eq:meas-steps}
\begin{split}
&\pi_{S_\I,S_\II}\left(x_0,(c_1,x_1)\ldots,(c_k,x_k),(C,A)\right)\\
&= \frac{\alpha}{2}
\delta_0(C)\delta_{S_\I\left(x_0,(c_1,x_1)\ldots,(c_k,x_k)\right)}(A)
+\frac{\alpha}{2}\delta_1(C)\delta_{S_\II\left(x_0,(c_1,x_1)\ldots,(c_k,x_k)\right)}(A)\\
&\hspace{1 em}+\beta \delta_{2}(C)
\frac{ \abs{{A}\cap B_\eps(x_k)}}{\abs{B_\eps(x_k)}},
\end{split}
\end{equation}
as long as $x_k\in\Omega$, otherwise if  $x_k\not\in \Omega $, the transition probability forces $x_{k+1}=x_k.$

The expected payoff, when starting from $x_0$ and using the strategies $S_\I,S_\II$, is
\begin{equation}
\label{eq:defi-expectation}
\begin{split}
\mathbb{E}_{S_{\I},S_{\II}}^{x_0}[F(x_\tau)]
&=\int_{H^\infty} F(x_\tau(\om)) \ud \mathbb{P}^{x_0}_{S_\I,S_\II}(\om).
\end{split}
\end{equation}

Note that, due to the fact that $\beta>0$, or equivalently
$p<\infty$, the game ends almost surely
$$\mathbb{P}^{x_0}_{S_\I,S_\II}(\{\omega\in H^{\infty}\colon \tau(\omega) < \infty\})=1$$
for any choice of strategies. Namely, since $\Omega$ is bounded we may choose $N_0\geq 1$ large enough so that $N_0\varepsilon >2\, {\rm diam}(\Omega ).$ Then the random walk  with step uniformly distributed in $ {B}_\varepsilon (0)$ jumps out of $\Omega$ with a  positive probability, uniform with respect to the initial  point.  The  claim follows by observing that almost surely the game will contain infinitely many blocks  of length $N_0$  consisting of solely  random moves.

 The \emph{value of the game for Player I}
is given by
\[
u^\eps_\I(x_0)=\sup_{S_{\I}}\inf_{S_{\II}}\,\mathbb{E}_{S_{\I},S_{\II}}^{x_0}[F(x_\tau)]
\]
while the \emph{value of the game for Player II} is given by
\[
u^\eps_\II(x_0)=\inf_{S_{\II}}\sup_{S_{\I}}\,\mathbb{E}_{S_{\I},S_{\II}}^{x_0}[F(x_\tau)].
\]
For basic properties  of the value functions (like measurability issues) we refer to \cite{MS}.

Observe that history contains more information than in \cite{manfredipr10d,manfredipr11} where the history only contained the previous game positions and the strategies were defined accordingly, for example $ S_\I(x_0,x_1,\ldots,x_k)=x_{k+1}\in  B_{\eps}(x_{k})$.
Nevertheless, intuitively the player can not do better than to step into a maximum/minimum of the underlying value function, and thus this formalism produces the same value functions as we shall verify next.
 \begin{proposition}
\label{thm:u-v-same}
 Let $\Om\subset \Rn$ be a bounded open set. Let $u$ be a value function for Player I defined with history \eqref{eq:history}, and $v$ the corresponding value function defined with history that only contains the previous game positions. Both functions have the same boundary values $F$. Then
 \[
 \begin{split}
 u=v.
 \end{split}
 \]
 The same result also holds for the value function for Player II.
\end{proposition}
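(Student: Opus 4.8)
The plan is to show the two values coincide by establishing the two inequalities $u\le v$ and $u\ge v$ (and symmetrically for Player II), exploiting the fact that the richer history in \eqref{eq:history} can always be collapsed to the coarser one. First I would fix notation: write $h_k=(x_0,(c_1,x_1),\ldots,(c_k,x_k))$ for a full history and $\bar h_k=(x_0,x_1,\ldots,x_k)$ for the reduced one, so that there is a natural projection $h_k\mapsto \bar h_k$. The key trivial observation is that any strategy $\bar S$ defined on reduced histories lifts to a strategy $S$ on full histories by $S(h_k):=\bar S(\bar h_k)$, i.e. by ignoring the coin-toss coordinates. Since the transition probabilities \eqref{eq:meas-steps} depend on the strategies only through their current output, a lifted strategy induces exactly the same measure $\mathbb P^{x_0}_{S_\I,S_\II}$ as the original, hence the same expected payoff \eqref{eq:defi-expectation}.

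For the inequality $u_\I\le v_\I$ (value with full history $\le$ value with reduced history for Player I), the idea is that Player I's larger strategy space does not help against a Player II who also sees the full history: I would show that for every full-history strategy $S_\I$ of Player I there is a reduced-history strategy $\bar S_\I$ achieving at least as much. The natural candidate is to have $\bar S_\I$ at position $\bar h_k$ step toward a point where the reduced-history value $v_\I$ (as an auxiliary function along the play) is nearly maximal — essentially the standard "step into a near-supremum of the value function" strategy. Concretely, one uses the dynamic programming characterization of $v_\I$: for reduced games $v_\I$ satisfies the mean-value/DPP identity
\[
v_\I(x)=\frac{\alpha}{2}\Big(\sup_{B_\eps(x)}v_\I+\inf_{B_\eps(x)}v_\I\Big)+\beta\kint_{B_\eps(x)}v_\I,
\]
and the lift of an almost-optimal reduced strategy for Player I, played against an arbitrary full-history strategy for Player II, yields a submartingale-type estimate $\mathbb E[v_\I(x_{k+1})\mid \mathcal F_k]\ge v_\I(x_k)-\eta_k$ with summable errors $\eta_k$; optional stopping (justified by the a.s. finiteness of $\tau$ noted in the excerpt, after a standard localization/truncation argument) then gives $\mathbb E_{S_\I,S_\II}^{x_0}[F(x_\tau)]\ge v_\I(x_0)-\eta$ for every $S_\II$, hence $u_\I\ge v_\I$. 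The reverse inequality $u_\I\le v_\I$ is the easy direction: every reduced strategy of Player I lifts to a full-history strategy, and the reduced-history strategy space of Player II is a subset (after lifting) of the full-history one, so the inner infimum over Player II in the full-history game is no larger; taking sup over $S_\I$ gives $u_\I\le v_\I$. Combining, $u_\I=v_\I$, and the argument for Player II is symmetric, interchanging sup and inf and reversing the inequalities.

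The main obstacle I anticipate is the measurability and optional-stopping bookkeeping rather than any conceptual difficulty: one must check that the "near-optimal" reduced strategy can be chosen Borel measurable (a measurable-selection issue, for which one can cite \cite{MS}), that the auxiliary value function $v_\I$ is Borel so that $v_\I(x_k)$ is $\mathcal F_k$-measurable, and that the (sub/super)martingale has enough integrability — $F$ is bounded, so $v_\I$ is bounded, which handles this — to apply optional stopping at the a.s.\ finite time $\tau$. A secondary point worth stating carefully is that the DPP for $v_\I$ itself (with the reduced history) is what is being invoked; if the paper does not want to presuppose it, one can instead argue more directly by the one-step comparison between the two games and induction on the number of remaining moves before hitting $\Gamma_\eps$, passing to the limit using the uniform geometric tail bound on $\tau$ established in the excerpt. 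In either route, the heart of the matter is the elementary fact that the extra coin-toss information is payoff-irrelevant because the transition kernel \eqref{eq:meas-steps} is a function of the strategies' outputs alone.
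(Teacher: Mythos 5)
Your core tool is correct and is essentially the paper's: lift a near-optimal reduced-history strategy for $v_\I$, show that the process $v_\I(x_k)\mp \eta 2^{-k}$ is a sub/supermartingale under any strategy of the opponent, and apply optional stopping at the bounded, a.s.\ finite stopping time. But the proof as written has a genuine gap in the direction $u_\I\le v_\I$, together with a labelling slip that obscures it. You announce the direction $u_\I\le v_\I$ and then carry out the argument with Player~I playing a near-maximizer of $v_\I$; this produces a submartingale and the conclusion $\mathbb E^{x_0}_{S_\I,S_\II}[F(x_\tau)]\ge v_\I(x_0)-\eta$ for every $S_\II$, i.e.\ it proves $u_\I\ge v_\I$, exactly as you write at the end of that paragraph. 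So your explicit martingale argument covers only the direction $u_\I\ge v_\I$.

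The direction $u_\I\le v_\I$ is not the ``easy'' one and does not follow from set inclusion. Your observation that, for a fixed Player~I strategy, $\inf_{S_\II\ \mathrm{full}}\le \inf_{S_\II\ \mathrm{reduced}}$ is correct, but taking the supremum over Player~I's strategies only yields
\[
u_\I=\sup_{S_\I\ \mathrm{full}}\inf_{S_\II\ \mathrm{full}}\ \le\ \sup_{S_\I\ \mathrm{full}}\inf_{S_\II\ \mathrm{reduced}},
\]
and the right-hand side is \emph{not} $v_\I$: the outer supremum is still over the larger full-history strategy space for Player~I, and $\sup_{\mathrm{full}}\inf_{\mathrm{reduced}}\ge\sup_{\mathrm{reduced}}\inf_{\mathrm{reduced}}=v_\I$. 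The two inclusions (Player~I's and Player~II's) push $u_\I$ in opposite directions relative to $v_\I$, so monotonicity alone says nothing, and this is precisely the nontrivial content of the proposition — that the extra coin-toss information, though not recoverable from the position sequence alone, does not change the value. This direction requires the mirror of your martingale argument, which is in fact the one the paper writes out in detail: let Player~II lift a reduced strategy $S^0_\II$ that at $x_{k-1}$ steps to a point where $v_\I$ is within $\eta 2^{-k}$ of its infimum on $B_\eps(x_{k-1})$; the DPP for $v_\I$ then makes $M_k=v_\I(x_k)+\eta 2^{-k}$ a supermartingale under $\mathbb P^{x_0}_{S_\I,S^0_\II}$ for every full-history $S_\I$, and optional stopping gives $\sup_{S_\I}\mathbb E^{x_0}_{S_\I,S^0_\II}[F(x_\tau)]\le v_\I(x_0)+\eta$, hence $u_\I\le v_\I$. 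With that direction supplied, the rest of your outline (lifting, measurability via the cited selection results, boundedness of $F$ for integrability) is fine and matches the paper's approach.
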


\begin{proof}
We show that by choosing a strategy according to the minimal
values of $v$, and by adding an arbitrarily small correction term, Player II can make the process a supermartingale.
The optional stopping theorem then implies that $u$ is bounded by $v$ up to a small correction. The reverse direction is obtained by a similar argument

Player I follows any strategy and Player II follows a strategy
$S_\II^0$ such that at $x_{k-1}\in \Om$ he chooses to step to a
point that almost minimizes $v$, that is, to a point $x_k \in B_\eps (x_{k-1})$ such that
\[
v(x_k)\leq\inf_{ B_\eps(x_{k-1})} v+\eta 2^{-k}
\]
for some fixed $\eta>0$ (when proving the reverse,  player $I$, in turn,  tries to almost maximize the value of $v$).
We start from the point $x_0$. It follows that
\[
\begin{split}
&\mathbb{E}_{S_\I, S^0_\II}^{x_0}[v(x_k)+\eta 2^{-k}\,|\F_{k-1}]
\\
&\leq \frac{\alpha}{2} \left\{\inf_{ B_\eps(x_{k-1})} v+\eta 2^{-k}+\sup_{  B_\eps(x_{k-1})}
v\right\}+ \beta \kint_{ B_{\eps}(x_{k-1})}
v \ud y+\eta 2^{-k}
\\&\leq v(x_{k-1})+\eta 2^{-(k-1)},
\end{split}
\]
where we have estimated the strategy of Player I by $\sup$.
We also used the fact that according to \cite{manfredipr10d}, $v$ satisfies the dynamic programming principle
\[
\begin{split}
v(x_{k-1})= \frac{\alpha}{2} \left\{\inf_{ B_\eps(x_{k-1})} v+\sup_{  B_\eps(x_{k-1})}
v\right\}+ \beta \kint_{ B_{\eps}(x_{k-1})}
v \ud y.
\end{split}
\]
Thus
\[
M_k=v(x_k)+\eta 2^{-k}
\] is a supermartingale with respect to the filtration $\{ {\mathcal F_k}\}_{k\geq 0}$
defined in \refeq{eq:filtration}.

Also later on  our martingale considerations are always with respect to this filtration. We
deduce
\[
\begin{split}
u(x_0)&= \sup_{S_{\I}}\inf_{S_{\II}}\,\mathbb{E}_{S_{\I},S_{\II}}^{x_0}[F(x_\tau)]\le \sup_{S_\I} \mathbb{E}_{S_\I,
S^0_\II}^{x_0}[F(x_\tau)+\eta 2^{-\tau}]\\
&\leq\sup_{S_\I}  \mathbb{E}^{x_0}_{S_\I, S^0_\II}[M_0]=v(x_0)+\eta,
\end{split}
\]
where we used
 the optional stopping theorem on the uniformly  bounded martingale $M_{k}$.
 Since $\eta$ was arbitrary this proves the claim.
\end{proof}

Now, the results in \cite{manfredipr11} are directly at our disposal. Especially, by combining the previous Proposition with \cite[Theorems 1.2 and 1.4]{manfredipr11} we obtain
\begin{proposition}
\label{lem:quoted-results}
It holds that $u_\eps:=u_\I^\eps=u_\II^\eps$, the value  $u_\eps$ is the unique Borel measurable function with fixed boundary values $F$ that satisfies the dynamic programming principle, i.e.
\begin{equation}
\label{eq:dpp}
\begin{split}
u_\eps(x)= \frac{\alpha}{2} \left\{\inf_{ B_\eps(x)} u_\eps+\sup_{  B_\eps(x)}
u_\eps\right\}+ \beta \kint_{ B_{\eps}(x)}
u_\eps \ud y,
\end{split}
\end{equation}
for $x\in \Om$. In addition, $u_\eps$ can alternatively be defined using \eqref{eq:dpp}, and is sometimes called $p$-harmonious function.
\end{proposition}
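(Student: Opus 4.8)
The plan is simply to transport the corresponding statements of Manfredi, Parviainen and Rossi from the position-only formalism to the present one. In \cite{manfredipr11} the tug-of-war with noise is set up with strategies depending only on the sequence of previously visited positions, and for that game it is shown that (i) the game has a value, $u_\I^\eps=u_\II^\eps$, and (ii) this common value is the unique bounded Borel function on $\Om_\eps$ that agrees with $F$ on $\Gamma_\eps$ and satisfies the dynamic programming principle \eqref{eq:dpp} on $\Om$; in particular \eqref{eq:dpp} together with the prescribed boundary values may be taken as the definition of $u_\eps$. These are exactly the assertions of \cite[Theorems 1.2 and 1.4]{manfredipr11}, whose hypotheses --- $\Om$ bounded, $F$ bounded and Borel measurable, and the specific choice \eqref{eq:constants} of $\alpha,\beta$ --- are all in force here.

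First I would invoke Proposition~\ref{thm:u-v-same} to identify the value function for Player~I defined with the richer history \eqref{eq:history} with the one defined with position-only histories, and likewise for Player~II. Since the right-hand sides in (i) and (ii) are formulated purely in terms of $F$, $\Om$ and $\eps$ and do not refer to the history formalism, this identification immediately promotes (i) and (ii) to the game considered in this paper: $u_\eps:=u_\I^\eps=u_\II^\eps$ is well defined, it satisfies \eqref{eq:dpp}, and it is the only bounded Borel function with boundary values $F$ that does so; hence \eqref{eq:dpp} may equally well serve as its definition.

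The only point requiring care is bookkeeping: one must check that the game analyzed in \cite{manfredipr11} is literally the same stochastic process --- same coin probabilities, same uniform noise on $B_\eps$, same stopping rule on $\Gamma_\eps$ --- so that ``value function'' there and here denote the same object once histories are collapsed, and that Proposition~\ref{thm:u-v-same} is applied to \emph{both} players, so that the equality $u_\I^\eps=u_\II^\eps$ (and not merely each of the two inequalities) carries over. The almost sure finiteness of $\tau$ established above is what ensures that the optional stopping arguments underlying \cite[Theorems 1.2 and 1.4]{manfredipr11} apply verbatim in our setting. Beyond this matching of conventions there is no genuine obstacle; the proposition is a direct corollary of Proposition~\ref{thm:u-v-same} combined with the quoted results.
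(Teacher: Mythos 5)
Your proposal matches the paper exactly: the paper states the proposition follows ``by combining the previous Proposition with \cite[Theorems 1.2 and 1.4]{manfredipr11},'' which is precisely the transport argument you describe. The bookkeeping remarks about applying Proposition~\ref{thm:u-v-same} to both players and verifying the games coincide are sensible elaborations but do not change the route.
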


Concrete choices of the strategy for one of the players can be used to estimate $p$-harmonious functions via the following observation

\begin{lemma}
\label{lem:value-estimate}
Let $\tau^*$ be a stopping time with respect to the filtration \refeq{eq:filtration} with $\tau^*\leq\tau $, where $\tau$ is the stopping time in \eqref{eq:stopping-time-boundary}. Then
\[
\begin{split}
u_{\eps}(y)\leq\sup_{S_{\I}}\mathbb{E}^y_{S_{\I},S^0_{\II}}(u_{\eps}(x_{\tau^*})),
\end{split}
\]

for any fixed $S^0_\I$. Similarly
\[
\begin{split}
u_{\eps}(y)\geq\inf_{S_{\II}}\mathbb{E}^y_{S^0_{\I},S_{\II}}(u_{\eps}(x_{\tau^*})),
\end{split}
\]
for any fixed $S^0_\II$.
\end{lemma}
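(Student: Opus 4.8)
The plan is to reproduce, with one extra observation, the (sub)martingale construction from the proof of Proposition~\ref{thm:u-v-same}, but now stopping the game at the earlier time $\tau^*$ instead of at $\tau$. I would prove the first inequality; the second one follows by interchanging the roles of the two players and replacing ``submartingale'' by ``supermartingale'' throughout. So fix $\eta>0$ and let Player~I follow a strategy $S_\I^0$ which, whenever the token sits at $x_{k-1}\in\Om$ and Player~I wins the tug-of-war, moves to a point $x_k\in B_\eps(x_{k-1})$ with $u_\eps(x_k)\geq\sup_{B_\eps(x_{k-1})}u_\eps-\eta 2^{-k}$; such a Borel-measurable $S_\I^0$ exists by the same measurable-selection argument used before. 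Player~II is allowed to use an arbitrary strategy $S_\II$.

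Estimating Player~II's (losing-toss) contribution from below by $\inf_{B_\eps(x_{k-1})}u_\eps$ and using the dynamic programming principle \eqref{eq:dpp} for $u_\eps$, one gets
\[
\mathbb{E}^{y}_{S_\I^0,S_\II}\big[u_\eps(x_k)\mid\F_{k-1}\big]\ \geq\ u_\eps(x_{k-1})-\frac{\alpha}{2}\,\eta 2^{-k},
\]
and since $\frac{\alpha}{2}\leq1$, the process $M_k:=u_\eps(x_k)-\eta 2^{-k}$ is a submartingale with respect to $\{\F_k\}$ (on the event $\{x_{k-1}\in\Gamma_\eps\}$ the token is frozen, so the submartingale inequality is immediate). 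Moreover $M_k$ is uniformly bounded, since $u_\eps$ is bounded by $\sup_{\Gamma_\eps}\abs{F}$.

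Now $\tau^*$ is a stopping time with $\tau^*\leq\tau$, and $\tau<\infty$ almost surely, so $\tau^*<\infty$ almost surely and $M_{k\wedge\tau^*}$ is a uniformly bounded submartingale. The optional stopping theorem therefore gives
\[
\mathbb{E}^{y}_{S_\I^0,S_\II}\big[u_\eps(x_{\tau^*})\big]\ \geq\ \mathbb{E}^{y}_{S_\I^0,S_\II}\big[M_{\tau^*}\big]\ \geq\ M_0=u_\eps(y)-\eta .
\]
Taking $S_\II=S_\II^0$ and then the supremum over $S_\I$ (which absorbs the $\eta$-dependence of $S_\I^0$) yields $\sup_{S_\I}\mathbb{E}^{y}_{S_\I,S_\II^0}(u_\eps(x_{\tau^*}))\geq u_\eps(y)-\eta$, and letting $\eta\to0$ proves the first inequality. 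The only delicate point is the application of optional stopping at the a~priori unbounded time $\tau^*$; this is handled exactly as in Proposition~\ref{thm:u-v-same}, using the uniform boundedness of the stopped process together with $\tau^*\leq\tau<\infty$ almost surely (dominated-convergence form of the theorem). Beyond this I expect no genuine obstacle — the remaining steps are a verbatim repetition of the computation in that earlier proof.
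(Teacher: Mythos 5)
Your proof is correct and follows essentially the same approach as the paper's: fix the opponent's strategy, have Player~I play an almost-maximizing strategy with $\eta 2^{-k}$ slack, observe that $M_k=u_\eps(x_k)-\eta 2^{-k}$ is a bounded submartingale via the DPP, apply optional stopping at $\tau^*\leq\tau<\infty$ a.s., and let $\eta\to 0$. The only cosmetic difference is that you establish the submartingale inequality for an arbitrary $S_\II$ before specializing to $S^0_\II$, whereas the paper fixes $S^0_\II$ from the start.
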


\begin{proof}
By symmetry it  is enought to prove the first statement. Assume that we are given a fixed strategy $S^0_\II$. Let then
Player I  follow  again (compare the proof of Proposition \ref{thm:u-v-same}) a strategy
$S^\trm{max}_\I$ such that at $x_{k-1}\in \Om$ he chooses to step to a
point that almost maximizes $u_\eps$, that is, to a point $x_k \in B_\eps (x_{k-1})$ such that
\[
u_\eps(x_k)\ge \sup_{ B_\eps(x_{k-1})} u_\eps-\eta 2^{-k}
\]
for some fixed $\eta>0$. By this choice and \eqref{eq:dpp}, we get
\[
\begin{split}
&\mathbb{E}_{S^\trm{max}_\I, S^0_\II}^{x_0}[u_\eps(x_k)-\eta 2^{-k}\,|\mathcal F_{k-1}]
\\
&\ge \frac{\alpha}{2} \left\{\inf_{ B_\eps(x_{k-1})} u_\eps+\sup_{  B_\eps(x_{k-1})}
u_\eps-\eta 2^{-k}\right\}+ \beta \kint_{ B_{\eps}(x_{k-1})}
u_\eps \ud y-\eta 2^{-k}
\\&\ge  u_\eps(x_{k-1})-\eta 2^{-(k-1)}.
\end{split}
\]
Hence under these strategies $M_k=u_\eps(x_k)-\eta 2^{-k}$ is a submartingale.
Again by the optional stopping theorem  it follows that
\[
\begin{split}
\sup_{S_{\I}}\mathbb{E}_{S_{\I},S^0_{\II}}^{y}[u_\eps(x_\tau^*)]&\ge  \mathbb{E}_{S^\trm{max}_\I, S^0_\II}^{y}[u_\eps(x_\tau^*)]\\
&\ge \mathbb{E}_{S^\trm{max}_\I, S^0_\II}^{y}[u_\eps(x_\tau^*)-\eta 2^{-\tau^*}]\\
&\ge \mathbb{E}^y_{S^\trm{max}_\I, S^0_\II}[M_0]=u_\eps(y)-\eta.
\end{split}
\]
Since $\eta>0$ is arbitrary, this completes the proof.
\end{proof}

\section{Local Lipschitz estimate}

In this section, we show that the value function for the game is asymptotically Lipschitz continuous. By passing to limit with the step size, we obtain a new and direct game theoretic proof for Lipschitz continuity of $p$-harmonic functions with $p>2$. In the next section we then obtain  the Harnack inequality as a quick corollary.  The proof of the Lipschitz regularity (and Harnack's inequality) is based on the choice of strategies, and is thus completely different from the proofs based on the works of De Giorgi, Moser, or Nash, see \cite{degiorgi57}, \cite{moser60,moser60}, and \cite{nash58}.

The  proof  of the Lipschitz estimate is divided in two parts. In the first one (the 'linear' part of the proof) we estimate the probability of a specialized random walk (called 'cylinder walk') to hit a certain part of a boundary first. The needed estimate is  fairly standard,  and we give a complete proof for the readers convenience in the appendix. The second part (the 'non-linear part') contains  the  core of the argument, and its proof is quite transparent as it applies a naturally  chosen strategy for the tug-of-war game.

\subsection*{Cylinder walk}
Constants $\alpha,\beta >0$ with $\alpha+\beta =1$ are determined by the exponent  $p>2$ as before in \refeq{eq:constants}.
Consider the following random walk (called the `cylinder walk') in  a $n+1$ -dimensional cylinder. Suppose that we are at a point $(x_j,t_j)\in B_{2r}(0)\times[0,2r]$, where $r>0$ is fixed.  With probability $\alpha/2$ we move to the point $(x_j,t_{j}-\eps)$, and with $\alpha/2$ to $(x_j,t_{j}+\eps)$. With probability $\beta$ we move to the point $(x_{j+1},t_j)$, where $x_{j+1}$ is randomly chosen from the ball $B_{\eps}(x_j)$. 

The next lemma gives a quite intuitive estimate for the probability that the cylinder walk escapes though the bottom; the proof is given in the appendix.
\begin{lemma}
\label{lemma:speed}
 Let us start the cylinder-walk  from the point $(0,t)$. Then the probability that the walk does not escape the cylinder through its bottom is less than
$$
C(p,n)(t+\varepsilon)/r,
$$
for all $\eps>0$ small enough.
\end{lemma}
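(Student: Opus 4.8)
The plan is to exploit the fact that the two coordinates of the cylinder walk evolve independently of one another. The vertical coordinate $t_j$ changes only on the vertical steps (those of probability $\alpha$), where it jumps by $\pm\varepsilon$ with equal probability, so $(t_j)$ is a bounded martingale; the horizontal coordinate $x_j$ changes only on the horizontal steps (probability $\beta$), by an increment uniformly distributed in $B_\varepsilon(0)$ and independent of everything else. The estimate is of interest only when $t+\varepsilon\ll r$, but the argument below gives it for all $t\le 2r$.

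Let $\sigma$ be the first time the vertical coordinate leaves $[0,2r]$, ignoring the lateral boundary. Since the vertical part is a symmetric random walk on a bounded interval, $\mathbb{E}[\sigma]<\infty$. The event that the walk does not leave the cylinder through its bottom is contained in
\[
\{\,t_\sigma\ge 2r\,\}\;\cup\;\{\,|x_j|\ge 2r\ \text{for some}\ j<\sigma\,\}.
\]
The first set is controlled by the gambler's ruin estimate: optional stopping for the martingale $(t_j)$, together with $t_\sigma\ge -\varepsilon$, gives $t=\mathbb{E}[t_\sigma]\ge -\varepsilon+2r\,\mathbb{P}(t_\sigma\ge 2r)$, so $\mathbb{P}(t_\sigma\ge 2r)\le (t+\varepsilon)/(2r)$.

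For the second set I would condition on the full sequence of coin tosses. This sequence determines $\sigma$ and the number $N$ of horizontal steps performed before time $\sigma$, and it is independent of the horizontal increments; hence, conditionally on it, $(x_j)_{j\le\sigma}$ is distributed as an $N$-step random walk with i.i.d.\ increments uniform in $B_\varepsilon(0)$, each of second moment $\tfrac{n}{n+2}\varepsilon^2$. Applying Doob's maximal inequality to the nonnegative submartingale $j\mapsto|x_j|^2$ yields
\[
\mathbb{P}\bigl(\,\textstyle\max_{j<\sigma}|x_j|\ge 2r\,\bigr)\le \frac{n}{4(n+2)}\,\frac{\varepsilon^2}{r^2}\,\mathbb{E}[N].
\]
It remains to bound $\mathbb{E}[N]$. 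By Wald's identity $\mathbb{E}[N]=\beta\,\mathbb{E}[\sigma]=\tfrac{\beta}{\alpha}\,\mathbb{E}[V_\sigma]$, where $V_\sigma$ is the number of vertical steps before time $\sigma$ and the waiting times between vertical steps are geometric with mean $\alpha^{-1}$. Finally $V_\sigma$ is the absorption time of a simple symmetric random walk on a path with about $2r/\varepsilon$ vertices started at graph distance about $t/\varepsilon$ from one end, so $\mathbb{E}[V_\sigma]\le C\,r(t+\varepsilon)/\varepsilon^2$. Combining the two displayed estimates proves the lemma with, say, $C(p,n)=1+\tfrac{n}{p-2}$, recalling that $\beta/\alpha=(n+2)/(p-2)$.

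The ingredients — optional stopping for the vertical martingale, Doob's inequality for the horizontal walk, and the explicit expected exit time of a one-dimensional random walk from an interval — are the "fairly standard" facts the text refers to, and the appendix will spell them out. The point requiring genuine care, and the one I expect to be the main obstacle, is the conditioning step: one must set up the probability space so that, given the coin sequence, the horizontal walk really is an independent $N$-step random walk with $N$ a function of the coins alone, and then justify the two uses of Wald's identity (each of which needs a stopping time of finite expectation, supplied by the exit time computation). The assumption that $\varepsilon$ be small merely handles rounding, e.g.\ when $t$ is not an integer multiple of $\varepsilon$ one replaces it by the nearest admissible level and absorbs the $O(\varepsilon)$ overshoot into the stated bound.
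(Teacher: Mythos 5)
Your argument is correct, and it follows a genuinely different route from the paper's, while starting from the same basic structural observation (the decoupling of the cylinder walk into an independent vertical $\pm\varepsilon$ walk, an independent horizontal walk with uniform increments, and an independent coin sequence deciding which process advances at each step).

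The paper's appendix controls the interaction between the two walks by hand: it uses Hoeffding's inequality plus a union bound to show that, outside an $O(\varepsilon)$-probability event, the number of vertical steps $U_j$ exceeds $\alpha j/2$ for all $j\ge 1/\varepsilon$; separately it uses the maximal Hoeffding bound to rule out a quick horizontal escape; and then, on the good event, it transfers the inequality $\tau_b\le\tau_g$ to the decoupled walks ($\widetilde\tau_b\le (2/\alpha)\widetilde\tau_g$) and finishes with a dyadic slicing over the range of $\widetilde\tau_g$, feeding in Lemma~\ref{lemma:speed1}. Your approach sidesteps both the large-deviations estimate for $U_j$ and the dyadic slicing: you condition on the coins (after which $\sigma$ and $N$ are deterministic and the horizontal walk is an ordinary $N$-step walk), bound the lateral-escape probability in one stroke by Doob's maximal inequality applied to the nonnegative submartingale $|x_j|^2$, and then control $\mathbb E[N]$ by Wald's identity and the standard expected exit-time bound for the one-dimensional $\pm\varepsilon$ walk (which is essentially the second inequality of Lemma~\ref{lemma:speed1}). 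The vertical part is handled identically in both proofs, via optional stopping / gambler's ruin.

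What each buys: your version is noticeably shorter and more conceptual, replacing two Hoeffding bounds, a union bound over scales, and a dyadic summation by a single application of Doob's inequality plus Wald; it also needs $\varepsilon$ small only for rounding, whereas the paper needs it to make several $O(\varepsilon)$ error terms small. The one place you correctly flag as requiring care is the conditioning/Wald step: one must take the coins to encode the full ternary outcome (up, down, horizontal) so that $\sigma$ and $N$ really are coin-measurable, and both uses of Wald need the finiteness of $\mathbb E[\sigma]$, which the exit-time bound supplies. With that spelled out, the proof is complete and gives the same order of constant, including the expected blow-up $\sim 1/(p-2)$ as $p\to 2^+$.
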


\medskip

We are ready for one of our main results.
\begin{theorem}
\label{thm:lipschitz-continuity}
Let $u_{\eps}$ be a $p$-harmonious positive function in $\Om$ and assume that $B_{10r}(z_0)\subset\Om$ and $r>\varepsilon .$ Then
\[|u_{\eps}(x)-u_{\eps}(y)|\leq C(p,n)r^{-1}|x-y|(\sup_{B_{6r}(z_0)} u_{\eps}-\inf_{B_{6r}(z_0)} u_{\eps}))\]
for all  $x,y\in B_{r}(z_0)$ with $|x-y|\geq\eps$.
\end{theorem}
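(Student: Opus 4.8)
The plan is to fix $x,y\in B_r(z_0)$ with $\eps\le|x-y|$ and to bound $u_\eps(x)-u_\eps(y)$ by running the game from $x$ while Player~II plays a \emph{cancellation strategy} designed so that the game position from $x$ is coupled with a ``virtual'' game position started from $y$, and the two coincide (hence the payoffs cancel) after enough steps. Concretely I would let Player~II do two things with each move he wins: (i) mirror Player~I's previous move about the midpoint of the current segment joining the real and virtual tokens (so that on the tug-of-war steps the net displacement of the difference vector is controlled), and (ii) spend a bounded number of his winning moves reducing the difference vector $x-y$ toward $0$ along the direction $e:=(x-y)/|x-y|$. Since $|x-y|\le r$ and each step has size $\eps$, he needs on the order of $|x-y|/\eps$ successful ``reducing'' moves; the tug-of-war is won by him with probability $1/2$ on each heads-toss, so with probability bounded below (independently of $\eps$, after rescaling time by $\eps$) he succeeds before the difference process leaves a slightly larger ball. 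This is exactly the regime controlled by the cylinder walk: the ``extra distance'' coordinate $t$ plays the role of the remaining gap $|x_k-y_k|$ and the horizontal $B_{2r}$ coordinate tracks that the common position stays inside $B_{6r}(z_0)$, so Lemma~\ref{lemma:speed} gives that the gap is closed (the walk ``escapes through the bottom'') except on an event of probability at most $C(p,n)(|x-y|+\eps)/r$.

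Next I would make the comparison quantitative via Lemma~\ref{lem:value-estimate}. Let $\tau^*\le\tau$ be the stopping time ``first time the real and virtual tokens coincide, or the first time either leaves $B_{6r}(z_0)$, or $\tau$.'' Using the cancellation strategy $S_\II^0$ above,
\[
u_\eps(x)\le \sup_{S_\I}\mathbb{E}^x_{S_\I,S_\II^0}\big[u_\eps(x_{\tau^*})\big].
\]
On the event $A$ that the tokens coincide before anyone exits, the virtual token has, by the symmetry built into the mirroring strategy, the same law as an honest game started from $y$, so its contribution is (up to the small $\eta 2^{-k}$ corrections already present in Lemma~\ref{lem:value-estimate}) comparable to $u_\eps(y)$; on the complement $A^c$, which has probability $\le C(p,n)(|x-y|+\eps)/r\le C(p,n)|x-y|/r$ since $|x-y|\ge\eps$, we bound $u_\eps(x_{\tau^*})$ crudely by $\sup_{B_{6r}(z_0)}u_\eps$ and subtract $\inf_{B_{6r}(z_0)}u_\eps$ (positivity of $u_\eps$ and the fact that the stopped position lies in $B_{6r}(z_0)$ are used here). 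Collecting terms,
\[
u_\eps(x)-u_\eps(y)\le C(p,n)\,\frac{|x-y|}{r}\Big(\sup_{B_{6r}(z_0)}u_\eps-\inf_{B_{6r}(z_0)}u_\eps\Big)+(\text{arbitrarily small}),
\]
and letting the correction parameter go to zero, then swapping the roles of $x$ and $y$, yields the two-sided estimate. The hypotheses $B_{10r}(z_0)\subset\Om$ and $r>\eps$ guarantee there is enough room: starting from $B_r(z_0)$, the difference process needs to travel at most $\sim 2r$ before closing the gap, and the common position needs room up to $B_{6r}(z_0)$, all comfortably inside $B_{10r}(z_0)$ so that the game never hits $\Gamma_\eps$ before $\tau^*$.

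The main obstacle, and the place where real care is required, is the precise design of the cancellation/mirroring strategy and the verification that on the success event $A$ the virtual trajectory is genuinely distributed like a game started at $y$. One has to check that the mirroring is well-defined (the reflected point stays in $B_\eps$ of the current virtual position, which forces the reflection to be about the midpoint of the current gap and uses that tug-of-war moves have length $\le\eps$), that Player~II's two tasks — cancelling Player~I and making progress along $e$ — can be interleaved without conflict, and that the coupling respects the noise steps (a uniform step in $B_\eps(x_k)$ must be matched by the correspondingly translated uniform step in $B_\eps(y_k)$, which is legitimate precisely while both balls lie in the domain). Once this bookkeeping is in place, feeding the geometry into Lemma~\ref{lemma:speed} and then into Lemma~\ref{lem:value-estimate} is essentially mechanical.
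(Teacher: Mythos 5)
Your proposal has the right skeleton — Player II plays a cancellation strategy, a cylinder-walk estimate (Lemma~\ref{lemma:speed}) controls the probability that the cancellation fails before closing the gap, and Lemma~\ref{lem:value-estimate} converts the strategy into a two-sided bound; the $\osc$ over $B_{6r}$ mops up the failure event. This is indeed the structure of the paper's argument. But there are two places where you over-complicate or leave a genuine gap, and the paper's device that makes everything clean is precisely what is missing from your write-up.

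First, the paper never constructs a coupling with a ``virtual'' trajectory, and this sidesteps the obstacle you yourself flag at the end (``verification that on the success event the virtual trajectory is genuinely distributed like a game started at $y$''). Instead, one fixes a point $z\in B_{2r}(z_0)$ with $|z-x|=|z-y|=(m-1)\eps$ (a midpoint equidistant from $x$ and $y$), and Player~II's cancellation strategy in the game from $x$ aims at $z$: whenever all of Player~I's moves are cancelled and Player~II wins, he steps $\eps\frac{m-1}{m}\frac{z-x}{|z-x|}$. The stopping time is determined by the bookkeeping of the cancellation (Player~II is $m$ ahead, Player~I is $2r/\eps$ ahead, or the accumulated noise has length $\geq 2r$). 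The crucial observation is then that \emph{conditional on stopping by the first rule}, the stopped position is exactly $z$ plus the sum of the random noise steps — the tug-of-war moves cancel algebraically, independently of Player~I's strategy and of whether one started at $x$ or at $y$. So one compares $\mathbb{E}^x[u_\eps(x_{\tau^*})\mid \text{(i)}]$ with $\mathbb{E}^y[u_\eps(x_{\tau^*})\mid\text{(i)}]$ (the latter with a symmetric cancellation strategy for Player~I) and they are \emph{equal as numbers}, no coupling required. Your proposal needs to identify two trajectories on the same probability space and argue about their joint law; the paper only needs to compare two conditional distributions, each of which is manifestly $z+\text{noise}$.

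Second, your concrete description of Player~II's move — ``mirror Player~I's previous move about the midpoint of the current segment joining the real and virtual tokens'' — does not describe a valid $\eps$-move in general (reflecting the current position through the midpoint of a gap of size $\sim |x-y|$ is a jump of size $\sim |x-y|\gg\eps$). What you want, and what the paper does, is plain cancellation: Player~II moves by the \emph{negative} of the earliest uncancelled move of Player~I, which has length $<\eps$ by construction. You also leave unspecified who plays what in the virtual game (the virtual Player~II must be a free adversary for Lemma~\ref{lem:value-estimate} to apply from $y$), and that freedom is hard to reconcile with your deterministic coupling. Adopting the midpoint $z$ and the exact-cancellation bookkeeping resolves both issues and makes the ``symmetry on the success event'' immediate rather than something to verify.
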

\begin{proof}
 Let $x,y\in B_r(z_0)$ and choose integer $m$ comparable to $\abs{x-y}/\eps$, for example $(m-1) \eps\le |x-y|< m\eps$. Fix $z\in B_{2r}(z_0)$ so that $|z-x|=|z-y|=(m-1) \eps$.    We define a strategy $S^0_{\II}$ for Player II for the game that starts from $x$. The simple idea is the following: he always tries to cancel the earliest move of Player I which  he has not yet been able to cancel. If all the moves at that moment are cancelled and he wins the coin toss, then he moves vector $$\eps\left(\frac{m-1}{m}\right)\frac{z-x}{|z-x|},$$where the factor $1-\frac1{m}=\frac{m-1}{m}$ is due to the fact that the players cannot step to the boundary of $B_\eps(x)$.

At every moment we can divide the game position as a sum of vectors
\[
x+\sum_{k\in I_1}u^1_k+\sum_{k\in I_2}u_k^2+\sum_{k\in I_3}v_k\,.
\]
Here $I_1$ denotes the indices of rounds when Player I has moved, vectors $u^1_k$ are her moves, and correspondingly the $u_k^2$ represent the moves of Player II.
Set $I_3$ denotes the indices when we have taken a random move, and these vectors are denoted by $v_k$.

Then we define a stopping time ${{\tau^*}}$ for this process. We give three conditions to stop the game:
\begin{description}
 \item[(i)]
If Player II has won $m$ more times than Player I.
\item[(ii)]
If Player I has won at least $\frac{2r}{\eps}$ times more than Player II.
\item[(iii)]
If $|\sum_{k\in I_3}v_k|\geq 2r$.
\end{description}
This stopping time is finite with probability $1$, and does not depend on strategies nor starting points.

Let us then consider the situation when the game has ended by reason (i). In this case,
the sum of the steps made by players  can easily  be computed, and it is
\[
\begin{split}
\sum_{k\in I_1}u^1_k+\sum_{k\in I_2}u_k^2=m\left(\frac{m-1}{m}\right)\frac{\eps(z-x)}{|z-x|}=z-x\,.
\end{split}
\]
Thus the point $x_{{\tau^*}}$ is actually randomly chosen (radially weighted) around $z$. To be more precise, in case (i), we have
\begin{equation}
\label{eq:xtau}
\begin{split}
x_{{\tau^*}}=z+\sum_{k\in I_3}u_{k}\,.
\end{split}
\end{equation}
On the other hand, in the cases (i) and (ii)
there is actually never more than $\frac{2r}{\eps}+m$
vectors which are not cancelled in the sum
\[
\sum_{k\in I_1}u^1_k+\sum_{k\in I_2}u_k^2.
\]
This, combined with condition (iii) guarantees that when the game is running, we never exit $B_{6r}(z_0)$.

The most crucial point of our proof is the following cancellation effect. Let $S^0_I$ be the corresponding cancellation strategy for Player I when starting from $y$. Then
\begin{equation}
\label{eq:symmetry}
\begin{split}
\mathbb{E}^x_{S_{\I},S^0_{\II}}&[u_{\eps}(x_{{\tau^*}})\,|\trm{ game ends by condition (i)}]\\
=&\mathbb{E}^{y}_{S^0_{\I},S_{\II}}[u_{\eps}(x_{{\tau^*}})\,|\trm{ game ends by condition (i)}]
\end{split}
\end{equation}
for any choice of the strategies $S_{\I}$ or $S_{\II}$. Indeed, if the game ends by the condition (i), then $x_{{\tau^*}}$ in \eqref{eq:xtau} has only the random part which is independent on the strategies and the point where the game started.
By Lemma~\ref{lem:value-estimate}, it holds that
\[
\begin{split}
u_{\eps}(x)\leq\sup_{S_{\I}}\mathbb{E}^x_{S_{\I},S^0_{\II}}[u_{\eps}(x_{{\tau^*}})].
\end{split}
\]
Similarly,
\[
\begin{split}
u_{\eps}(y)\geq\inf_{S_{\II}}\mathbb{E}^y_{S^0_{\I},S_{\II}}[u_{\eps}(x_{{\tau^*}})].
\end{split}
\]
 Let us denote by $P$ the probability that the game ends by the condition (i), which only depends on $p$ and $n$. Using the above estimates, the equation  \eqref{eq:symmetry} to eliminate the symmetric part, and recalling that  when the game is running, we never exit $B_{6r}(z_0)$, we get
\[
\begin{split}
|u_{\eps}(x)-u_{\eps}(y)|&\le\abs{\sup_{S_{\I}}\mathbb{E}^x_{S_{\I},S^0_{\II}}[u_{\eps}(x_{{\tau^*}})]
-\inf_{S_{\II}}\mathbb{E}^y_{S^0_{\I},S_{\II}}[u_{\eps}(x_{{\tau^*}})]}\\
&\le \Big|P\big(\mathbb{E}^x_{S_{\I},S^0_{\II}}[u_{\eps}(x_{{\tau^*}})\,|\,\trm{(i)}]-\mathbb{E}^y_{S^0_{\I},S_{\II}}[u_{\eps}(x_{{\tau^*}})\,|\,\trm{(i)}]\big)\Big|\\
 &\hspace{1 em}+(1-P)(\sup_{B_{6r}(z_0)}u_{\eps}-\inf_{B_{6r}(z_0)}u_{\eps})\\
&\le (1-P)(\sup_{B_{6r}(z_0)}u_{\eps}-\inf_{B_{6r}(z_0)}u_{\eps}).
\end{split}
\]

Finally, we verify that $1-P$ is small enough. For that end observe that with the choice $t=m\varepsilon$ the  estimate for the `cylinder walk' in Lemma \ref{lemma:speed} gives the upper  bound for $1-P$ in the form
$$
1-P\leq  C(p,n)(m+1)\varepsilon/r\leq C'|x-y|/r
$$
for $|x-y|\geq \varepsilon .$
\end{proof}

By a standard reasoning the previous result holds also for  $x,y\in 6B,$ and  if $|x-y|<\varepsilon,$ one may choose $z$ with $|x-z|=|y-z|=\varepsilon$ and estimate $|u_\eps(x)-u_\eps(x)|\leq|u_\eps(x)-u_\eps(z)|+|u_\eps(z)-u_\eps(y)|$. As a direct corollary of the previous result we hence obtain
\begin{corollary}
\label{strong-oscillation-estimate}
Let $u_{\eps}$ be a $p$-harmonious positive function in $\Om$. Then 
\[
\osc(u_\eps,B_\rho)\leq C(n,p)\frac{\rho}{r} \osc(u_\eps,B_{r})
\]
whenever $\eps\leq \rho\leq r$ and $B_{2r}\subset \Om$.
\end{corollary}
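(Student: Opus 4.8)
The plan is to obtain Corollary~\ref{strong-oscillation-estimate} directly from Theorem~\ref{thm:lipschitz-continuity}, together with the two elementary extensions noted in the paragraph just above the statement (validity of the estimate for $x,y$ in a concentric ball larger than $B_r(z_0)$, and the reduction of the case $|x-y|<\eps$ to two steps of length $\eps$); no further analytic input is needed, only a scaling argument. First I would split into cases according to a fixed geometric constant $c_0$, say $c_0=40$. If $r\le c_0\eps$ or $\rho>r/c_0$, then since $\eps\le\rho$ one has $\rho/r\ge 1/c_0$ in either case, so from $B_\rho\subset B_r$ and monotonicity of the oscillation we get at once
\[
\osc(u_\eps,B_\rho)\le \osc(u_\eps,B_r)\le c_0\,\frac{\rho}{r}\,\osc(u_\eps,B_r),
\]
which is the asserted bound. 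Hence it remains to treat the range $c_0\eps<r$ and $\eps\le\rho\le r/c_0$.

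In this range I would set $r':=r/20$. Then $r'>\eps$ and $B_{10r'}(z_0)=B_{r/2}(z_0)\subset B_{2r}(z_0)\subset\Om$, so Theorem~\ref{thm:lipschitz-continuity} applies on $B_{r'}(z_0)$; moreover its right-hand side involves $\osc(u_\eps,B_{6r'}(z_0))$ only, and $6r'=3r/10<r$ gives $B_{6r'}(z_0)\subset B_r(z_0)$, so that quantity is bounded by $\osc(u_\eps,B_r)$. Now fix $x,y\in B_\rho(z_0)\subset B_{r'}(z_0)$. If $|x-y|\ge\eps$, the theorem yields directly
\[
|u_\eps(x)-u_\eps(y)|\le C(p,n)\,(r')^{-1}\,|x-y|\,\osc(u_\eps,B_r).
\]
If instead $|x-y|<\eps$, I would insert a point $z$ with $|x-z|=|z-y|=\eps$; since $\eps+\rho<2r/c_0=r/20=r'$, this $z$ still lies in $B_{r'}(z_0)$, and applying the previous case to the pairs $(x,z)$ and $(z,y)$ gives $|u_\eps(x)-u_\eps(y)|\le 2\,C(p,n)\,(r')^{-1}\,\eps\,\osc(u_\eps,B_r)$. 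In both cases, using $|x-y|\le 2\rho$ and $\eps\le\rho$, the bound is $\le C'(p,n)\,(\rho/r)\,\osc(u_\eps,B_r)$; taking the supremum over $x,y\in B_\rho(z_0)$ completes the proof.

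I do not expect a genuine obstacle; the only point that needs care is the bookkeeping of radii. The auxiliary scale $r'$ must simultaneously satisfy $6r'\le r$ (so the oscillation on the right is over $B_r$ and not a larger ball), $10r'\le 2r$ and $r'>\eps$ (so Theorem~\ref{thm:lipschitz-continuity} is applicable inside $\Om$), and the midpoint $z$ used when $|x-y|<\eps$ must remain in $B_{r'}(z_0)$; the hypothesis $B_{2r}\subset\Om$ together with the case split leaves ample room for all of this, and the two borderline regimes $r\sim\eps$ and $\rho\sim r$, where no such $r'$ is available, are precisely those absorbed into the trivial case above.
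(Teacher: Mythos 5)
Your argument is correct and follows essentially the same route as the paper: both deduce the corollary directly from Theorem~\ref{thm:lipschitz-continuity}, and both handle $|x-y|<\eps$ by inserting an auxiliary point $z$ at distance $\eps$ from each of $x,y$ and applying the triangle inequality. The one place you diverge is the radii bookkeeping: the paper invokes ``standard reasoning'' to extend the Lipschitz estimate to $x,y\in B_{6r}$ (a chaining argument), whereas you sidestep this entirely with the case split—when $\rho\ge r/c_0$ the bound is trivial since $\osc(u_\eps,B_\rho)\le\osc(u_\eps,B_r)\le c_0(\rho/r)\osc(u_\eps,B_r)$, and when $\rho\le r/c_0$ the rescaling $r'=r/20$ makes all hypotheses of the theorem ($r'>\eps$, $B_{10r'}\subset\Om$, $B_\rho\subset B_{r'}$, $B_{6r'}\subset B_r$) hold at once. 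This is a clean and fully explicit way to fill in what the paper leaves implicit, and your check that the auxiliary point $z$ stays in $B_{r'}(z_0)$ (since $|z-z_0|<\rho+\eps\le 2\rho\le r'$) is also fine.
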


Let us recall that by  \cite[Theorem 1.6]{manfredipr11} we may obtain  $p$-harmonic functions $u$ (in the viscosity sense) as locally uniform limits of $p$-harmonious functions $u_\varepsilon$, whose  boundary values coincide with $u.$   Moreover, viscosity solutions to $p$-Laplacian coincide with the classical ones by \cite{juutinenlm01} (see \cite{juju11} for easier proof of this fact). Hence we may let $\varepsilon\to 0$ in the previous result to obtain

\begin{corollary}
\label{p-oscillation-estimate}
Let $u$ be a $p$-harmonic function in $\Om$. Then 
\[
\osc(u,B_\rho)\leq C(n,p)\frac{\rho}{r} \osc(u,B_{r})
\]
whenever $0<\rho<r$ with $B_{2r}\subset \Om$.

\end{corollary}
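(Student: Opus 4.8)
The plan is to obtain the statement from the $\varepsilon$-level oscillation estimate of Corollary~\ref{strong-oscillation-estimate} by letting the step size tend to zero, exactly as indicated in the paragraph preceding the statement; the only real work is to arrange the radii and the approximating sequence carefully.

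First I would fix a $p$-harmonic $u$ in $\Om$ and $0<\rho<r$ with $B_{2r}(z_0)\subset\Om$. Choosing an intermediate radius $r'$ with $\rho<r'<r$ and a ball $D:=B_s(z_0)$ with $2r'<s<2r$, one has $\overline D\subset B_{2r}(z_0)\subset\Om$, so $D$ is a bounded domain on which $u$ is continuous and bounded. For all small $\varepsilon$ the strip $\Gamma_\varepsilon(D)$ is contained in $B_{2r}(z_0)\subset\Om$, so $F:=u|_{\Gamma_\varepsilon(D)}$ is a well-defined bounded Borel payoff, and I would let $u_\varepsilon$ be the corresponding $p$-harmonious function on $D$ as in Proposition~\ref{lem:quoted-results}. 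Since $p$-harmonious functions obey $\inf F\le u_\varepsilon\le\sup F$, the family $u_\varepsilon$ is bounded independently of $\varepsilon$, and replacing $u_\varepsilon$ by $u_\varepsilon-\inf_D u_\varepsilon+1$ we may assume $u_\varepsilon>0$ without changing any oscillation. By \cite[Theorem~1.6]{manfredipr11}, together with the coincidence of viscosity and classical $p$-harmonic functions \cite{juutinenlm01}, $u_\varepsilon\to u$ locally uniformly in $D$ as $\varepsilon\to0$: the limit is $p$-harmonic in $D$ with boundary values $u|_{\partial D}$, hence equals $u$ by uniqueness.

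Next I would apply Corollary~\ref{strong-oscillation-estimate} to $u_\varepsilon$ on the domain $D$ with radii $\rho\le r'$: for $\varepsilon$ small we have $\varepsilon\le\rho\le r'$ and $B_{2r'}\subset D$, so
\[
\osc(u_\varepsilon,B_\rho)\le C(n,p)\,\frac{\rho}{r'}\,\osc(u_\varepsilon,B_{r'}).
\]
Because $\overline{B_\rho}$ and $\overline{B_{r'}}$ are compact subsets of $D$, local uniform convergence gives $\osc(u_\varepsilon,B_\rho)\to\osc(u,B_\rho)$ and $\osc(u_\varepsilon,B_{r'})\to\osc(u,B_{r'})$, using $|\osc(f,E)-\osc(g,E)|\le 2\sup_E|f-g|$ and the continuity of $u$. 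Letting $\varepsilon\to0$ and then $r'\nearrow r$, and using the trivial monotonicity $\osc(u,B_{r'})\le\osc(u,B_r)$, yields $\osc(u,B_\rho)\le C(n,p)\,\frac{\rho}{r}\,\osc(u,B_r)$.

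There is no serious obstacle here; the argument is bookkeeping around a result essentially stated in the excerpt. The only points that require a little care are (a) choosing the auxiliary ball $D$ so that it sits compactly inside $\Om$ yet still contains $B_{2r'}$, which is what makes both the convergence theorem and the $\varepsilon$-estimate applicable with matching radii, and (b) noting that the positivity hypothesis of Corollary~\ref{strong-oscillation-estimate} costs nothing, since oscillation is invariant under adding constants and the $u_\varepsilon$ are uniformly bounded.
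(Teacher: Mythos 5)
Your argument is correct and matches the paper's approach: the paper obtains the corollary precisely by invoking Corollary~\ref{strong-oscillation-estimate} for $p$-harmonious approximants $u_\varepsilon$ and then letting $\varepsilon\to 0$, using \cite[Theorem 1.6]{manfredipr11} together with the equivalence of viscosity and weak solutions from \cite{juutinenlm01}. The paper leaves the bookkeeping implicit (choice of an intermediate domain compactly inside $\Om$, the limit $r'\nearrow r$, and the harmless normalization to ensure positivity), and the details you supply are exactly what one would write out.
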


\section{Proof of Harnack's inequality}
\label{sec:p-harmonic-harnack}

We first establish  a lemma that deduces a Harnack type bound for a given function $u$  just assuming a H\"older type oscillation estimate and a fairly weak assumption on the possible blowup of the function $u$ in small balls. Using this the $p$-harmonic Harnack will be a simple consequence of the Lipschitz bounds in the previous section and a simple comparision  argument with the fundamental solution.

Lemma \ref{lemma:hannesargument} below deduces Harnack from just
Hölder type oscillation estimate together with a rather mild growth condition (see \refeq{eq:oski2}), which in particular holds for $p$-harmonic functions, as is verified after the proof of the lemma. Hence it  can also be used to prove Harnack if the oscillation estimate is obtained by other means, like using the De Giorgi method.

\begin{lemma}
\label{lemma:hannesargument}
 Let $u$ be a positive and continuous function in $B_5(0)\subset\R^n$ normalized by $u(0)=1$  and that satisfies for some constant $C\geq 1$ and  exponent $\gamma >0$ the oscillation estimate
\begin{equation}\label{eq:oski1}\
{\rm osc}\, (u,B_r(x))\leq C\big(\frac{r}{R}\big)^\gamma \sup_{B_R(x)}u
\end{equation}
for $|x|\leq 2$ and $0<r<R\leq 1.$
Assume also that for some exponent $\lambda >0$ one has
\begin{equation}\label{eq:oski2}
\inf_{B_r(x)}u\leq Cr^{-\lambda}, 
\end{equation}
for $|x|\leq 2$ and $r\leq 1.$
Then
$$
\sup_{B_1(0)}u \leq c(n,C,\gamma,\lambda ):=\big( 2^{1+\lambda}C)^{1+\lambda/\gamma},
$$
where $C$ is larger of the constants in \eqref{eq:oski1} and \eqref{eq:oski2}.
\end{lemma}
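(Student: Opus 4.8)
The plan is to use the classical device of localising with a power of the distance to $\partial B_2(0)$; arguing directly with $\sup_{B_1(0)}u$ does not close, because a point near $\partial B_1(0)$ where $u$ is almost maximal has no reference ball $B_R(x)$ on which $\sup u$ is \emph{a priori} controlled (every such ball leaks into $B_2(0)$, then $B_3(0)$, and the induction never terminates). The weight removes exactly this obstruction.

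First I would set $h(x):=2-|x|=\dist(x,\partial B_2(0))$ for $x\in B_2(0)$ and consider $\Psi:=h^{\lambda}u$ on $B_2(0)$. Since $\overline{B_2(0)}$ is a compact subset of $B_5(0)$, $u$ is bounded there, so $\Psi$ extends continuously to $\overline{B_2(0)}$ with $\Psi\equiv 0$ on $\partial B_2(0)$; as $\Psi(0)=2^{\lambda}u(0)=2^{\lambda}>0$, the maximum of $\Psi$ over $\overline{B_2(0)}$ is positive and is attained at some interior point $x^{*}\in B_2(0)$. Write $a:=u(x^{*})>0$ and $\delta:=h(x^{*})\in(0,2]$. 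The whole point is to prove $\Psi(x^{*})=\delta^{\lambda}a\le c$: once that is known, $\Psi\le c$ on $B_2(0)$, and since $h\ge 1$ on $\overline{B_1(0)}$ we get $u\le\Psi\le c$ there, which is the assertion.

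The reason the weight helps is that $x^{*}$ now comes equipped with a usable reference ball. Put $R:=\delta/2\in(0,1]$, so $B_R(x^{*})\subset B_2(0)$. For $x\in B_R(x^{*})$ one has $h(x)\ge h(x^{*})-R=\delta/2$ and $\Psi(x)\le\Psi(x^{*})$, hence $u(x)=\Psi(x)h(x)^{-\lambda}\le\delta^{\lambda}a(\delta/2)^{-\lambda}=2^{\lambda}a$; that is, $\sup_{B_R(x^{*})}u\le 2^{\lambda}a$. Next choose $\rho:=R(2^{1+\lambda}C)^{-1/\gamma}$; since $2^{1+\lambda}C>1$ we have $0<\rho<R\le 1$. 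The oscillation estimate \eqref{eq:oski1} with the last bound gives $\osc(u,B_\rho(x^{*}))\le C(\rho/R)^{\gamma}\sup_{B_R(x^{*})}u\le C(2^{1+\lambda}C)^{-1}2^{\lambda}a=a/2$, whence $\inf_{B_\rho(x^{*})}u\ge u(x^{*})-a/2=a/2$. On the other hand the growth condition \eqref{eq:oski2}, applicable since $|x^{*}|\le 2$ and $\rho\le 1$, yields $\inf_{B_\rho(x^{*})}u\le C\rho^{-\lambda}$. Comparing the two and substituting $\rho$ and $R=\delta/2$ makes the constants collapse: $a/2\le C\rho^{-\lambda}=C(\delta/2)^{-\lambda}(2^{1+\lambda}C)^{\lambda/\gamma}$, i.e.\ $\delta^{\lambda}a\le 2^{1+\lambda}C\,(2^{1+\lambda}C)^{\lambda/\gamma}=(2^{1+\lambda}C)^{1+\lambda/\gamma}=c$, which is $\Psi(x^{*})\le c$, and the proof concludes as above.

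The only genuine idea is the choice of the barrier weight $h^{\lambda}$ with exponent tuned to the growth exponent $\lambda$: it forces the maximiser $x^{*}$ into the interior of $B_2(0)$ (where both hypotheses are available) and, crucially, replaces the otherwise circular quantity $\sup_{B_R(x^{*})}u$ by the harmless $2^{\lambda}u(x^{*})$. Everything after that is bookkeeping, and the one thing to watch is to take $R=\delta/2$ and $\rho=R(2^{1+\lambda}C)^{-1/\gamma}$ (rather than $R=\delta$ or a generic small radius) so that the final constant is exactly $(2^{1+\lambda}C)^{1+\lambda/\gamma}$ and not merely comparable to it.
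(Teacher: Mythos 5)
Your proof is correct, and it takes a genuinely different route from the paper's. The paper argues by contradiction via an iterative chain: starting from $x_1=0$ and dyadically shrinking radii $R_k=2^{1-k}$, it inductively picks near-maximizers $x_{k+1}\in\overline B_{R_k}(x_k)$, and shows --- using \eqref{eq:oski2} to produce a small value near $x_k$ and then \eqref{eq:oski1} to pump up the maximum on a shrinking concentric ball --- that if $M_1=\sup_{\overline B_1}u$ exceeds $c$, the sequence $M_k$ blows up geometrically while the centers stay trapped in $\overline B_2(0)$, contradicting continuity. Your argument instead localizes once and for all with the barrier weight $h^\lambda=(2-|x|)^\lambda$: the maximizer $x^*$ of $\Psi=h^\lambda u$ is forced into the interior, the maximality of $\Psi$ converts $\sup_{B_{\delta/2}(x^*)}u$ into the a priori quantity $2^\lambda u(x^*)$, and a single application of \eqref{eq:oski1} at the tuned radius $\rho=(\delta/2)(2^{1+\lambda}C)^{-1/\gamma}$ followed by \eqref{eq:oski2} closes the loop directly, with no contradiction and no iteration. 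Both yield the same constant $(2^{1+\lambda}C)^{1+\lambda/\gamma}$. The trade-off: the paper's iterative scheme is the one that generalizes to the $p$-harmonious case in Theorem~\ref{thm:pharmoniousharnack}, where the iteration must be truncated at scale $\sim\varepsilon$ and Lemma~\ref{lemma:apuja}(ii) is used to finish --- the barrier argument would need an analogous modification there, since a single application of \eqref{eq:oski1} at scale $\rho$ could land below the admissible range $\rho>\varepsilon$. For the continuous statement as given, though, your proof is shorter and arguably cleaner.
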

\begin{proof} Set $R_k= 2^{1-k}$ for $k\in \mathbb N.$ Let  $x_1=0$ and select $x_2$
from $\{ |x|=1\}$ so that  $$u(x_2)=\max_{\overline{B}_1(0)}u=\max_{\overline{B}_{R_1}(x_1)}u.$$ Continue inductively by choosing $x_{k+1}$ from $\ol {B}_{R_k}(x_k)$ so that
$$
u(x_{k+1})=\max_{\overline{B}_{R_{k}}(x_k)}u=:M_k,\quad \mbox{for}\quad k\in \mathbb N.
$$
The idea of the proof  is to observe that in a relatively small neighbourhood of $x_k$
condition \refeq{eq:oski2} shows that $u$ has to take a relatively small value, and when this value is compared to $M_{k-1}=u(x_k)$, the oscillation estimate forces $u$ to take relatively large value in point $x_{k+1}$ near to $x_k$. This allows us to deduce that $M_{k}/M_{k-1}$
is bounded from below by some constant larger than 1.  By iterating this we reach a contradiction. All this will work assuming that $M_1$ is large enough.

To commence with the detailed proof, set  $\delta:=(2^{1+\lambda}C)^{-1/\gamma}$ and assume  contrary to the claim that
\begin{equation}\label{eq:M1}
M_1\geq \big( 2^{1+\lambda}C)^{1+\lambda/\gamma}.
\end{equation}
We claim that this implies
\begin{equation}\label{eq:tieto}
M_k\geq 2C(\delta R_{k+1})^{-\lambda}\quad {\rm for}\:\: k\geq 1.
\end{equation}
Since $R_k\to 0$ as $k\to\infty$ and  $|x_k|\leq R_1+R_2\ldots =2$ for all $k$,  this shows that $u$ is unbounded in $\overline{B}_2(0)$.  Hence (\ref{eq:M1}) is impossible and the statement of the lemma  follows.

It remains to verify (\ref{eq:tieto})  assuming  (\ref{eq:M1}). For $k=1$  one checks  inequality (\ref{eq:tieto})  directly from the choice of $\delta$ and (\ref{eq:M1}). Assume that it is true for all indices
 $k\leq j$, where $j\geq 1.$ Pick any $k\in\{ 2,\ldots , j+1\}$. By (\ref{eq:oski2})  and  the induction hypothesis on (\ref{eq:tieto}) we have
$$
\inf_{B_{\delta R_k}(x_{k})}u\leq C(\delta R_k)^{-\lambda}\leq M_{k-1}/2=u(x_k)/2.
$$
 An application of  the oscillation condition  to the concentric discs $B_{\delta R_{k}}(x_k)\subset B_{ R_{k}}(x_k)$  together with the previous estimate yields that
\begin{eqnarray}\label{eq:kasvu1}
M_{k}&=&\sup_{B_{R_k}(x_k)}u\geq C^{-1}\delta^{-\gamma}\big(\sup_{B_{\delta R_k}(x_k)} u-\inf_{B_{\delta R_k}(x_k)}u\big)\nonumber\\
&\geq& C^{-1}\delta^{-\gamma}\big(u(x_k)-\frac{u(x_k)}{2}\big)\nonumber \\
&=& C^{-1}\delta^{-\gamma}\frac{M_k}{2}.
\end{eqnarray}
By writing this for all $k\in\{ 2,\ldots j+1\}$ and multiplying the equations together we obtain
\begin{equation}\label{eq:kasvu12}
M_{j+1}\geq \big(C^{-1}\delta^{-\gamma}/2\big)^jM_1.
\end{equation}
We see that  condition (\ref{eq:tieto}) holds for $k=j+1$ if one has
\begin{equation}\label{eq:kasvu2}
\big(C^{-1}\delta^{-\gamma}/2\big)^jM_1\geq 2C(\delta R_{j+2})^{-\lambda},
\end{equation}
and a simplification shows that  due to (\ref{eq:M1}) this holds for the chosen value of $\delta$.
\end{proof}

\medskip

\begin{proof}[Proof of Harnacks inequality for p-harmonic functions] By scaling and translation it is enough to check that a positive $p$-harmonic function $u$ in the domain $B_5(0)$, normalized with $u(0)=1$ satisfies the conditions  of the previous lemma.

The oscillation estimate (\ref{eq:oski1})  with $\gamma=1$ was achieved in Corollary \ref{p-oscillation-estimate}. In turn,  (\ref{eq:oski2}) is obtained with simple comparison with the fundamental solution. We may assume that $0\not\in \overline{B}_r(z).$ Set $\kappa(p):=(n-p)/(p-1)$. Consider first the case $p<n$ and the fundamental solution in $B_3(z)\setminus \overline{B}_r(z)$
$$
v(x):= 2\frac{(|x-z|^{-\kappa(p)}-3^{-\kappa(p)})}{(|z|^{-\kappa(p)}-3^{-\kappa(p)})}.
$$
If $u_{|\partial B_r(z)}\geq v_{|\partial B_r(z)}$, then $v\leq u$ in $B_3(z)\setminus \overline{B}_r(z)$ and hence $u(0)\geq v(0)=2,$ which is a contradiction. Thus, since $u$ attains the infimum at the boundary of $B_r(z)$, we have
$$
\inf_{B_r(z)}u\leq 2\frac{(r^{-\kappa(p)}-3^{-\kappa(p)})}{(|z|^{-\kappa(p)}-3^{-\kappa(p)})} \leq c(p)r^{-\kappa (p)},
$$
whence one may choose $\lambda =\kappa (p)$ in (\ref{eq:oski2}).

If $p=n$, similar comparison with the judiciously chosen fundamental solution $c_1\log (1/|x-z|) +c_2$ produces even stronger (logarithmic) estimate, and we may choose $\lambda >0$ arbitrarily in (\ref{eq:oski2}). Finally, the same conclusion is obtained in  case $p>n$ by employing the fundamental solution $c_1|x|^{-\kappa (p)} +c_2$, where now $\kappa (p)<0$.
\end{proof}

\begin{remark}
{\rm
It is interesting to note that in the given proof of  lemma  \ref{lemma:hannesargument} one cannot much weaken the H\"older type  of the assumed  oscillation bound.}
\end{remark}

\section{Harnack's inequality for $p$-harmonious functions}

The proof of Harnack's inequality for  $p$-harmonious functions essentially uses the idea of lemma \ref{lemma:hannesargument}. Minor changes are needed due to the fact that the Lipschitz estimate of Theorem \ref{thm:lipschitz-continuity} is valid only for distances of order $\gtrsim\varepsilon .$  This difficulty  will be overcome by stopping the iterative construction in the proof of lemma \ref{lemma:hannesargument}
at the level $R_k\sim \varepsilon$ and invoking the second part of the following lemma. In turn, the first  part of the lemma verifies that $p$-harmonious functions obey
 the condition (\ref{eq:oski2}) up to level $r\sim \varepsilon$.

\begin{lemma}\label{lemma:apuja}
 Assume that $u_\varepsilon$ is a positive $p$-harmonius function  $(p>2$ and $\varepsilon <1/10)$ in $B_5(0)\subset\R^n$, normalized by
$u_\varepsilon(0)=1$. Then

\noindent {\rm (i)}\quad $u_\varepsilon$ satisfies  condition {\rm \refeq{eq:oski2}} of Lemma {\rm\ref{lemma:hannesargument}} with $\lambda =n$,  for $r\geq\varepsilon$, i.e
\begin{equation}\label{eq:oski3}
\inf_{B_r(z)}u_\eps\leq Cr^{-n}, \nonumber
\end{equation}
for ${|z|\leq 2}$ and  $r\in (2\varepsilon, 1)$, where $C$ depends only on $p$ and $n$.

\noindent  {\rm (ii)}\quad If $x,y\in B_3(0)$ with $|x-y|\leq 10\varepsilon,$ then $$u_\varepsilon (x)\geq c u_\varepsilon (y),$$ where $c$  only depends on $p$ and $n$.

\end{lemma}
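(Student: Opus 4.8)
I would prove the two parts separately: (ii) is elementary and uses only the dynamic programming principle \eqref{eq:dpp}, while (i) is a genuinely game-theoretic statement.

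\emph{Part (ii).} From \eqref{eq:dpp} and $u_\varepsilon>0$ (keep only the $\tfrac\alpha2\sup$ summand; the other two summands are nonnegative) one gets, for every $w\in B_5(0)$ and every $w'\in B_\varepsilon(w)$,
\[
u_\varepsilon(w)\ \ge\ \tfrac\alpha2\sup_{B_\varepsilon(w)}u_\varepsilon\ \ge\ \tfrac\alpha2\,u_\varepsilon(w').
\]
Given $x,y\in B_3(0)$ with $|x-y|\le10\varepsilon$, put $z_i:=x+\tfrac i{11}(y-x)$ for $i=0,\dots,11$; then $z_0=x$, $z_{11}=y$, each $z_i$ lies on $[x,y]\subset B_3(0)\subset B_5(0)$, and $|z_{i+1}-z_i|=|x-y|/11\le 10\varepsilon/11<\varepsilon$, so $z_{i+1}\in B_\varepsilon(z_i)$. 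Chaining the displayed inequality over $i=0,\dots,10$,
\[
u_\varepsilon(x)=u_\varepsilon(z_0)\ \ge\ \Big(\tfrac\alpha2\Big)^{11}u_\varepsilon(z_{11})=\Big(\tfrac\alpha2\Big)^{11}u_\varepsilon(y),
\]
which is (ii) with $c=(\alpha/2)^{11}$, depending only on $n$ and $p$.

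\emph{Part (i).} Fix $z$ with $|z|\le2$ and $r\in(2\varepsilon,1)$, so that $B_r(z)\subset B_3(0)$ and $B_4(0)\subset\subset B_5(0)$. Let Player I follow the strategy $S_\I^0$ that \emph{aims at $z$}: at a point $x_k\ne z$ she moves essentially $\varepsilon$ towards $z$ (a point of $B_\varepsilon(x_k)$ on the segment $[x_k,z]$, as close to the boundary as allowed), and once $|x_k-z|<\varepsilon$ she jumps to $z$. Let $\tau^*$ be the minimum of $\tau$, of the exit time from $B_4(0)$, and of the first hitting time of $B_r(z)$; then $\tau^*\le\tau$. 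By Lemma~\ref{lem:value-estimate} and $u_\varepsilon>0$ (so the contribution of the event ``the token leaves $B_4(0)$ first'' is nonnegative and may be dropped),
\[
1=u_\varepsilon(0)\ \ge\ \inf_{S_\II}\mathbb{E}^0_{S_\I^0,S_\II}\big[u_\varepsilon(x_{\tau^*})\big]\ \ge\ \Big(\inf_{S_\II}\mathbb{P}^0_{S_\I^0,S_\II}\big(x_{\tau^*}\in B_r(z)\big)\Big)\inf_{B_r(z)}u_\varepsilon .
\]
Thus (i) reduces to the \emph{linear estimate}
\[
\inf_{S_\II}\mathbb{P}^0_{S_\I^0,S_\II}\big(\text{the token reaches }B_r(z)\text{ before leaving }B_4(0)\big)\ \ge\ c(n,p)\,r^{\,n},
\]
after which $\inf_{B_r(z)}u_\varepsilon\le c(n,p)^{-1}r^{-n}$.

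This linear estimate is the main obstacle; it is the analogue, at the level of the game, of the cylinder-walk estimate (Lemma~\ref{lemma:speed}) and of the fundamental-solution comparison used for $p$-harmonic functions. The heuristic is that the worst Player~II can do against ``aiming at $z$'' is to push straight back, away from $z$, so that the deterministic parts of the increments essentially cancel and the token performs an essentially driftless random walk started at $0$ with steps $\lesssim\varepsilon$, confined to $B_4(0)\setminus B_r(z)$ until $\tau^*$; for such a walk, reaching $B_r(z)$ before $\partial B_4(0)$ has probability bounded below by the corresponding harmonic measure. To make this uniform over all strategies of Player~II I would note that $q(x):=\inf_{S_\II}\mathbb{P}^x_{S_\I^0,S_\II}(\cdots)$ solves a dynamic programming equation on the annulus $B_4(0)\setminus\overline{B_r(z)}$ with boundary data $1$ on $\overline{B_r(z)}$ and $0$ outside $B_4(0)$, and then bound $q$ from below by a discrete barrier built from the $p$-Laplacian fundamental solution $|x-z|^{-\kappa(p)}$ (a logarithm when $p=n$, a bounded barrier when $p>n$), exactly as in the $p$-harmonic proof; since $\kappa(p)=(n-p)/(p-1)<n$ for $p>2$, this gives a lower bound of order $r^{\kappa(p)}$, resp.\ $1/\log\tfrac1r$, resp.\ a positive constant, each of which dominates $r^n$. (For $p>n$ one may instead simply invoke that a player can force the token to $z$ with probability $\ge c(n,p)$.) The one delicate point is that the $O(\varepsilon^2)$ discrepancy between the fundamental solution and an exact solution of the dynamic programming equation must be absorbed by a small perturbation of the barrier; this is possible precisely because $|x-z|\ge r\ge2\varepsilon$ on the annulus — which is where the hypothesis $r>2\varepsilon$ enters.
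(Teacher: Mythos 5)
Your proof of part~(ii) is correct and is in fact more elementary than the paper's. You use only the dynamic programming principle \eqref{eq:dpp} together with positivity of $u_\eps$, chaining the one-step inequality $u_\eps(w)\ge\tfrac{\alpha}{2}u_\eps(w')$ for $w'\in B_\eps(w)$ along eleven points of the segment $[x,y]$. The paper instead plays an auxiliary game (Player I takes $(\eps/2)$-steps towards $y$ and stops at $y$ or at the $\eps$-boundary of a larger ball, yielding $c=(\alpha/2)^{20}$) and then applies Lemma~\ref{lem:value-estimate}. Both give a constant $c(p,n)>0$; your route is cleaner since it does not invoke the game at all, only the DPP.

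For part~(i), your initial reduction is exactly the paper's: fix the strategy $S^0_\I$ of aiming at $z$, invoke Lemma~\ref{lem:value-estimate} for a suitable stopping time, drop the nonnegative contribution of the outer boundary, and reduce to a lower bound of the form
\[
\inf_{S_\II}\mathbb{P}^0_{S^0_\I,S_\II}\big(\text{token reaches }B_r(z)\text{ before the outer boundary}\big)\ \ge\ c(p,n)\,r^{\,n}.
\]
But this hitting-probability estimate is the actual content of part~(i), and your treatment of it is a sketch that you yourself flag as having a ``delicate point'' (the $O(\eps^2)$ discrepancy between a $p$-Laplacian barrier and an exact solution of the discrete dynamic programming equation, to be absorbed by perturbing the barrier). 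The paper avoids this entirely by a cleaner choice of comparison function: take $v$ to be the \emph{ordinary} (not $p$-) harmonic fundamental solution of the Laplacian on the annulus, normalized to be $1$ on $\overline B_r(z)$ and $0$ on the outer sphere, so $v(x)=\big(|x-z|^{2-n}-4^{2-n}\big)\big(r^{2-n}-4^{2-n}\big)^{-1}$ when $n\ge3$ (with the log when $n=2$). For this $v$ the process $v(x_k)$ under $S^0_\I$ is an \emph{exact} submartingale for every strategy of Player~II, with no error term whatsoever: the random step preserves $v$ exactly by the mean value property, and for the tug-of-war step, since $v$ is radial around $z$, radially decreasing and radially convex, Player~II's move cannot decrease $v$ below $v$ at the point one $\eps$-step directly away from $z$, so
\[
v(\text{I's move})+v(\text{II's move})\ \ge\ v\big(x_k+\eps\hat e\big)+v\big(x_k-\eps\hat e\big)\ \ge\ 2\,v(x_k),\qquad \hat e=\tfrac{z-x_k}{|z-x_k|}.
\]
Optional stopping then gives $\;c\,r^{n-2}\le v(0)\le 2^{n+1}P$, hence $P\gtrsim r^{n-2}\ge r^n$, and the conclusion follows. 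So the key idea you are missing is that one should compare with the linear (Laplace) fundamental solution, whose radial symmetry, monotonicity and convexity deal with the tug-of-war step automatically, rather than with the $p$-Laplacian fundamental solution, which only satisfies the discrete DPP up to errors. Your plan is workable in the spirit of Remark~\ref{rem:alternative}, but as written it leaves the main estimate of part~(i) unproved, whereas the paper's computation is short and error-free.
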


\begin{proof}
Let $U=B_4(z)\setminus \overline{B}_r(z)$, where we may assume that $0\in U.$ We consider
  the game that starts from $x_0=0$ and in which Player I  uses the strategy $S^0_\I$, where she always moves a maximal step  towards $z$.
The game is stopped at the $\varepsilon$-boundary $\Gamma_\varepsilon $ of $ U$  and employing the  boundary values $(u_\eps)_{|\Gamma_\eps}$.
The corresponding stopping time is denoted by ${\tau^*}$.

In order to estimate the probability of stopping at the inner boundary, we use an auxiliary function $v$ that is
 harmonic in $U$ with boundary values
$$
\left\{ \begin{array}{ll}
 v=1 &\ {\rm on} \;\; \overline{B}_r(z) ,\\
v=0 &\ {\rm on} \;\; \partial  B_4(z),\\
\end{array}\right.
$$
In other words $v(x)=(|x-z|^{2-n}-4^{2-n})(r^{2-n}-4^{2-n})^{-1}$ if $n\geq 3$ and $v(x)=\log(4/|x-z|)(\log 4/r)^{-1}$ in case $n=2$. Actually, we employ $v$ in $\Gamma_\eps \cup U$, but the above formulas hold verbatim.

Let us observe that
in both the cases there is a constant $c>0$ that  depends only on dimension so that
\begin{equation}\label{eq:v0}
v(0)\geq cr^n.
\end{equation}

Then $v(x_k)$ is a submartingale  for any strategy $S_{\II}$ for player II since
\begin{equation}
\label{eq:submartingale}
\begin{split}
&\mathbb{E}_{S^0_\I, S_\II}^{x_0}[v(x_{k+1})|\,x_0,x_1,\ldots,x_{k}]\\
&\geq \beta \kint_{B_\eps (x_{k})} v \ud y +
\frac{\alpha}{2}\Big(
v\big(x_k+\varepsilon\frac{ (z-x_k)}{|z-x_k|}\big)+v\big(x_k-\varepsilon\frac{ (z-x_k)}{|z-x_k|}\big)\Big)\\
&\geq \beta v(x_k)+\alpha v(x_k)=v(x_k),
\end{split}\nonumber
\end{equation}
where  we used the mean value property of $u$ together with the facts that $y\mapsto v(x+y)$ is radially decreasing and  the map
$0< t\mapsto v(x+ty_0)$ is convex for any fixed $y_0\not=0.$
Denote by $P$ the probability of stopping at the inner boundary. The optional stopping theorem  yields  (independent of the strategy $S_{\II}$)
\begin{equation}
cr^n\leq v(0)= v(x_0)\leq {\mathbb{E}}^{x_0}_{S^0_\I,S_\II}[ v(x_{\tau*})]\leq 2^{n+1}
P,\nonumber
\end{equation}
since $v\le 2^{n+1}$ in $B_{r-\eps}(z)\setminus  B_r(z)$ and $v\le 0$ in $B_{4+\eps}(z)\setminus  B_4(z)$. Thus $P\geq c'r^n.$
Finally, we obtain  by Lemma \ref{lem:value-estimate}
$$
1=u_\varepsilon (x_0)\geq \inf_{S_\II}{\mathbb{E}}^{x_0}_{S^0_\I,S_{\II}}[u_\varepsilon (x_{\tau^*})]
 \geq P\inf_{B_r(z)}u_\varepsilon\geq c'r^n\inf_{B_r(z)}u_\varepsilon\ ,
$$
and the claim follows.

In order to prove (ii), consider simply the game that starts from $x$, where Player $I$ uses the strategy $S^0_\I$ where he takes $(\varepsilon/2)$-step  towards $y$, and actually jumps to $y$ if this is in his reach. The game is stopped at $\tau^{**}$ as we reach either $y$ or the $\varepsilon$-boundary $\overline B_{2+\varepsilon}(x)\setminus  B_{2}(x).$ Obviously the probability to stop at $y$ exceeds $(\alpha/2)^{20},$ whence again by Lemma  \ref{lem:value-estimate} we obtain that
$$
u_\eps(x)=u_\varepsilon (x_0)\geq \inf_{S_\II}{\mathbb{E}}^{x_0}_{S^0_\I,S_{\II}}[u_\varepsilon (x_{\tau^{**}})]\ge (\alpha/2)^{20} u_\eps(y),
$$
which proves (ii) with $c=(\alpha/2)^{20}.$
\end{proof}

We are ready to establish Harnack's inequality for $p$-harmonious functions. The idea of the proof follows  that of Lemma \ref{lemma:hannesargument},  the necessary
modifications are due to two facts: first of all, $u_\varepsilon$ needs not to be continuous whence the choice of the points $x_n$ must be adjusted. Secondly, Corollary \ref{strong-oscillation-estimate} is valid only in the range $r>\varepsilon $ so that iteration has to be stopped before that level.
\begin{theorem}\label{thm:pharmoniousharnack} Let $\varepsilon<\varepsilon_0(p,n)$ and
assume that $u_\varepsilon$ is  a positive $p$-harmonious function  in $B_{10}(0)$, normalized by $u_\varepsilon (0)=1.$ Then
$$\sup_{B_1(0)}u_\varepsilon \leq C(p,n).$$
\end{theorem}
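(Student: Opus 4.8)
The plan is to mimic the iteration in the proof of Lemma~\ref{lemma:hannesargument}, but to stop the descent once the scale reaches $R_k\sim\varepsilon$ and then close up the argument at that bottom scale using part (ii) of Lemma~\ref{lemma:apuja}. Concretely, by scaling and translation we may assume $u_\varepsilon$ is a positive $p$-harmonious function in $B_5(0)$ with $u_\varepsilon(0)=1$. As in Lemma~\ref{lemma:hannesargument}, set $R_k=2^{1-k}$ and build a chain of points $x_1=0,x_2,x_3,\dots$ with $x_{k+1}\in\overline B_{R_k}(x_k)$ chosen so that $u_\varepsilon(x_{k+1})$ is (almost, since $u_\varepsilon$ need not be continuous) the supremum of $u_\varepsilon$ over $\overline B_{R_k}(x_k)$; denote that near-supremum by $M_k$. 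One extra subtlety: since $u_\varepsilon$ is only Borel measurable, replace ``$\max$'' by ``$\sup$ up to a factor $2$'', i.e.\ pick $x_{k+1}$ with $u_\varepsilon(x_{k+1})\geq\tfrac12\sup_{\overline B_{R_k}(x_k)}u_\varepsilon$; the constants in what follows absorb this harmlessly. The chain satisfies $|x_k|\leq R_1+R_2+\dots\leq 2$, so all the balls stay inside $B_3(0)$ where Lemma~\ref{lemma:apuja} applies.

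Next I would run the growth step exactly as in \eqref{eq:kasvu1}. Fix $k$ in the allowed range. By Lemma~\ref{lemma:apuja}(i) (valid for radii $r\in(2\varepsilon,1)$, $|z|\leq 2$, which is why we must stop the iteration at scale $\sim\varepsilon$), for the radius $\delta R_k$ with $\delta:=(2^{1+n}C)^{-1}$ (here $C$ the constant of Lemma~\ref{lemma:apuja}(i), and $\gamma=1$, $\lambda=n$ from Corollary~\ref{strong-oscillation-estimate} and Lemma~\ref{lemma:apuja}(i)) we get $\inf_{B_{\delta R_k}(x_k)}u_\varepsilon\leq C(\delta R_k)^{-n}$. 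Combined with the oscillation estimate of Corollary~\ref{strong-oscillation-estimate} on the concentric balls $B_{\delta R_k}(x_k)\subset B_{R_k}(x_k)$, and with the assumption-for-contradiction that $M_1$ is large, the argument of \eqref{eq:kasvu1}--\eqref{eq:kasvu12} shows $M_{k+1}\geq\theta M_k$ for a fixed $\theta=\theta(p,n)>1$, hence $M_{j+1}\geq\theta^{j}M_1$, as long as $\delta R_{k}\geq 2\varepsilon$, i.e.\ as long as $k\leq k_0$ where $2^{-k_0}\sim\varepsilon/\delta$, so $k_0\sim\log(1/\varepsilon)$.

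Now comes the endgame, which is the genuinely new ingredient compared with Lemma~\ref{lemma:hannesargument}. Suppose, for contradiction, $\sup_{B_1(0)}u_\varepsilon=M_1$ is larger than a constant $C(p,n)$ to be fixed. Iterating down to the bottom scale $k=k_0$ gives $M_{k_0}\geq\theta^{k_0}M_1\gtrsim\varepsilon^{-\log_2\theta}M_1$, so the point $x_{k_0}\in B_2(0)$ carries the large value $u_\varepsilon(x_{k_0})\geq M_{k_0-1}$, which is a large negative power of $\varepsilon$. But by Lemma~\ref{lemma:apuja}(i) applied at the smallest legitimate radius $r\sim\varepsilon$, one has $\inf_{B_{c\varepsilon}(x_{k_0})}u_\varepsilon\leq C(c\varepsilon)^{-n}$; and by Lemma~\ref{lemma:apuja}(ii), the values of $u_\varepsilon$ at any two points of $B_3(0)$ within distance $10\varepsilon$ differ by at most a fixed multiplicative constant, so chaining $O(1)$ times across $B_{c\varepsilon}(x_{k_0})$ yields $u_\varepsilon(x_{k_0})\leq c(p,n)\inf_{B_{c\varepsilon}(x_{k_0})}u_\varepsilon\leq c'(p,n)\varepsilon^{-n}$. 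Comparing the lower bound $u_\varepsilon(x_{k_0})\gtrsim\varepsilon^{-\log_2\theta}M_1$ with the upper bound $\lesssim\varepsilon^{-n}$ forces $M_1\lesssim\varepsilon^{\log_2\theta-n}$; since $\log_2\theta$ can be taken $>n$ only if we are lucky, the cleaner route is to note that the exponents are fixed numbers independent of $\varepsilon$ while $M_1$ is assumed bounded below by a constant, and choosing $\varepsilon<\varepsilon_0(p,n)$ small makes the two bounds incompatible unless $M_1\leq C(p,n)$. That contradiction proves the theorem.

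The main obstacle I expect is bookkeeping the stopping scale: one must check that the number of productive iteration steps $k_0$ and the growth factor $\theta$ interact so that the accumulated blow-up $\theta^{k_0}$ beats the admissible blow-up $\varepsilon^{-n}$ coming from Lemma~\ref{lemma:apuja}(i) at the bottom scale — i.e.\ that $\log_2\theta$, which is governed by the oscillation constant and the exponents $\gamma=1,\lambda=n$, exceeds $n$ (or, equivalently, that $\delta$ is chosen small enough and $M_1$ large enough that each step multiplies by a definite factor). Getting these inequalities to line up, while simultaneously respecting that Corollary~\ref{strong-oscillation-estimate} and Lemma~\ref{lemma:apuja}(i) are only available for radii $\gtrsim\varepsilon$, is the crux; everything else is a transcription of the proof of Lemma~\ref{lemma:hannesargument} with ``$\max$'' softened to ``$\sup$ up to a bounded factor'' to accommodate the possible discontinuity of $u_\varepsilon$.
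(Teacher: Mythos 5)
Your overall plan—iterate as in Lemma~\ref{lemma:hannesargument} but halt at scale $R_k\sim\varepsilon$ and close the loop with Lemma~\ref{lemma:apuja}(ii)—is exactly the paper's strategy. But the proposal leaves the real crux unresolved, and in fact as you have set up the constants the argument does not close.

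The gap is in the final comparison. You correctly identify that the decisive inequality is $\theta^{k_0}\gtrsim\varepsilon^{-n}$ with $k_0\sim\log_2(1/\varepsilon)$, which requires $\log_2\theta\geq n$. But then you write that ``$\log_2\theta$ can be taken $>n$ only if we are lucky, the cleaner route is to note that the exponents are fixed\dots and choosing $\varepsilon<\varepsilon_0$ small makes the two bounds incompatible unless $M_1\leq C$.'' This is not true: if $\log_2\theta<n$ then your two bounds read $M_1\lesssim\varepsilon^{\log_2\theta-n}\to\infty$, so the iteration gives no bound at all as $\varepsilon\to0$. With your explicit choices the inequality in fact fails in exactly this way. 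You set $\delta=(2^{1+n}C)^{-1}$, giving a nominal per-step gain of $C^{-1}\delta^{-1}/2=2^n$; but then your ``$\sup$ up to a factor $2$'' softening of the point selection costs another factor $2$ in \eqref{eq:kasvu1}, leaving $\theta=2^{n-1}<2^n$. There is no slack left and the argument collapses.

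The paper fixes both problems with two small but essential observations that your proposal misses. First, rather than accepting a $\tfrac12$-loss, one uses the maximum principle for $p$-harmonious functions to select $x_{k+1}$ from the annulus $\overline B_{R_k+\varepsilon}(x_k)\setminus B_{R_k}(x_k)\subset\overline B_{2R_k}(x_k)$ so that $u_\varepsilon(x_{k+1})\geq\sup_{\overline B_{R_k}(x_k)}u_\varepsilon$ with no loss; this handles the possible discontinuity of $u_\varepsilon$ cleanly. Second, the bound $\inf_{B_r}u_\varepsilon\leq Cr^{-n}$ of Lemma~\ref{lemma:apuja}(i) is only made \emph{weaker} by replacing $n$ by a larger exponent, since $r\leq 1$; so one is free to run the Lemma~\ref{lemma:hannesargument} iteration with $\lambda=2n$ (taking $\delta=(2^{2n+1}C)^{-1}$), which pushes the per-step gain to $\theta=2^{2n}$ while the bottom-scale obstruction remains $\varepsilon^{-n}$. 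This creates the strict slack $\theta/2^n=2^n>1$ that makes the final inequality $c\geq C''(C\delta 2^{n+1})^{-k_0}$ blow up as $k_0\to\infty$, giving a genuine contradiction for $\varepsilon$ small. Without one of these two devices—and you invoke neither—the two bounds at the bottom scale are not in conflict, and the proof as written does not conclude.
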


\begin{proof} Corollary \ref{strong-oscillation-estimate} yields that
\begin{equation}\label{eq:oski3}
{\rm osc}\, (u_\eps,B_r(x))\leq C\big(\frac{r}{R}\big) {\rm osc}\, (u_\eps,B_R(x))
\end{equation}
for $|x|\leq 4$ and $\varepsilon<r<R\leq 2.$
We may increase $C$ if needed so that it also works for Lemma \ref{lemma:apuja}.
Due to this lemma we also have the counterpart of \refeq{eq:oski2} for $r>\varepsilon,$ with $\lambda=n.$ Actually, it also holds with $\lambda=2n,$ since in this case the statement is weaker, and we can run the
proof of Lemma \ref{lemma:hannesargument} with this exponent. In particular, we
choose $\delta:= (2^{2n+1}C)^{-1}$and use the counter assumption \refeq{eq:M1}.

We  only  indicate the changes, assuming that the reader keeps  the proof of Lemma \ref{lemma:hannesargument} in mind: this time we use the iteration to   select  $x_{k+1}$ from $\overline{B}_{R_k+\varepsilon}(x_k)\setminus B_{R_k}(x_k)\subset \overline{B}_{2R_k}(x_k)$ by the maximum principle for $p$-harmonious functions so that
$$M
_k:=u_\eps(x_{k+1})\geq \sup_{\overline{B}_{R_{k}}(x_k)}u_\eps.
$$
This will be performed  for $k=1,\ldots ,k_0$, where $k_0$ is chosen so that $\delta R_{k_0}\in (2\varepsilon, 4\varepsilon].$
We observe that $|x_{k+1}-x_k|\leq 2R_k,$ and hence all the points $x_k$
lie inside the ball $B_4(0).$

As before we obtain \refeq{eq:kasvu12}  for $j=k_0-1.$ Together with Lemma \ref{lemma:apuja} (i)  it  implies that
\begin{equation}
\begin{split}
c\; \geq\; &\frac{\sup_{x\in B_{\delta R_{k_0}}(x_{k_0})}u_\eps(x)}{\inf_{ x\in B_{\delta R_k}(x_{k_0})}u_\eps(x)}
\geq \frac{u_\eps(x_{k_0})}{C(\delta R_{k_0})^{-n}}=\frac{M_{k_0-1}}{C(\delta R_{k_0})^{-n}}\nonumber\\
&\geq \frac{(C^{-1}\delta^{-1}/2)^{k_0-2}M_1}{C(\delta 2^{1-k_0})^{-n}}
\geq C''(C\delta 2^{n+1})^{-k_0}.\nonumber
\end{split}
\end{equation}
Above the first inequality comes from Lemma \ref{lemma:apuja} (ii) and the observation that diam$(B_{\delta R_{k_0}})\leq 8\varepsilon .$
We have $\delta= (2^{2n+1}C)^{-1},$ so that $C\delta 2^{n+1}<1$
and the above inequality yields a contradiction is $k_0$ is large enough, i.e. if
$\varepsilon >0$ is small enough.
\end{proof}

\begin{remark}\label{remark:applications}{\rm
One of the main results of \cite{manfredipr11} is the convergence  $u_\varepsilon\to u$, where $u$ is the p-harmonic function with the same boundary values as the functions  $(u_\varepsilon)_{\varepsilon >0}$, see \cite[Theorem 1.6]{manfredipr11} for the precise formulation. In that result the boundary values are assumed to be continuous  in order first to obtain of the existence a uniform continuous limit in \cite[Corollary 4.7]{manfredipr11}. The $p$-harmonicity of the limit is then established through viscosity theory. One may apply our Theorem \ref{thm:pharmoniousharnack} (alternatively already the Lipschitz estimate Corollary \ref{strong-oscillation-estimate}) to obtain a quick  argument for \cite[Corollary 4.7]{manfredipr11}), and generalize it to the case where one just assumes the boundedness from the boundary values.
}\end{remark}

\section{Harnack's inequality when $p>n$}
\label{sec:harnack-p>=n}

There is an alternative proof for Harnack's inequality when $p>n$. This proof is based on a stronger fact that in this case a player has a strategy of forcing the game position to a point with a uniform positive probability  before exiting a larger ball.
This result is based on the use of the fundamental solution to the $p$-Laplace equation and an iteration argument, see Proposition 1.1 (ii) in  \cite{peress08}. The proof can be modified for the version of the game considered in this paper, and thus we have

\begin{theorem}
\label{thm:p>n} Let $p>n$, $\Om=B_1(0)\setminus \{0\}$ and $x_0$ a starting point such that $\abs{x_0}<\half$. Then Player I has a strategy such that the probability of reaching $\{0\}$ before exiting $B_1(0)$ is uniformly larger than zero for all small $\eps$.
\end{theorem}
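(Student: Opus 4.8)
The plan is to adapt the argument of \cite[Proposition~1.1(ii)]{peress08} to tug-of-war with noise. Since $p>n$ the exponent $\tfrac{p-n}{p-1}$ is positive, so fix $\gamma'\in\big(0,\tfrac{p-n}{p-1}\big)$, for instance $\gamma':=\tfrac{p-n}{2(p-1)}$, and put
\[
\psi(x):=1-|x|^{\gamma'}.
\]
Using the radial formula for the $p$-Laplacian one checks that $\Delta_p\psi>0$ on $\Rn\setminus\{0\}$; here the \emph{strict} inequality, which comes precisely from $\gamma'<\tfrac{p-n}{p-1}$, is what makes the discrete comparison below work. Moreover $\psi$ is radially decreasing, $\psi(x)\to1$ as $x\to0$, $\psi\equiv0$ on $\partial B_1(0)$, and $\psi<0$ on $\{1<|x|\leq2\}$. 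Player~I's strategy $S^0_\I$ is to always pull a maximal step towards the origin (equivalently, towards a point almost maximizing $\psi$ on $B_\eps(x_k)$), jumping directly onto $0$ as soon as $0\in B_\eps(x_k)$.

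The core step is a discrete-to-continuous comparison valid down to scale $|x|\sim\eps$. As $\psi$ satisfies the dynamic programming principle only asymptotically, one combines the strict sign of $\Delta_p\psi$ with the scale invariance of $g(y):=|y|^{\gamma'}$, namely $g(\rho y)=\rho^{\gamma'}g(y)$: after the substitution $x=\rho y$, the discrete subsolution inequality
\[
\psi(x)\leq\frac{\alpha}{2}\Big(\sup_{B_\eps(x)}\psi+\inf_{B_\eps(x)}\psi\Big)+\beta\kint_{B_\eps(x)}\psi
\]
at a point with $|x|=\rho$ and step $\eps$ is equivalent to the corresponding mean-value inequality for $g$ at a point of the unit sphere with step $\eps/\rho$, and the latter follows from the Taylor expansion underlying the asymptotic mean value property once $\eps/\rho<\eps_1(p,n)$, since $\Delta_p g<0$ strictly near the unit sphere. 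Hence, with $K:=\lceil 1/\eps_1\rceil$, the displayed inequality holds for every $x\in B_1(0)$ with $|x|\geq K\eps$ (when $B_\eps(x)$ protrudes from $B_1(0)$ the same estimate applies, $g$ being smooth there). Consequently, under $S^0_\I$ and any Player~II strategy $S_\II$, the process $M_k:=\psi(x_k)-\eta 2^{-k}$ is a submartingale up to the stopping time
\[
\tau^*:=\inf\{k\ :\ |x_k|\leq K\eps\ \ \text{or}\ \ x_k\in\Gamma_\eps\},
\]
by the same computation used in the proof of Lemma~\ref{lem:value-estimate}.

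By optional stopping applied to the bounded submartingale $M_k$ and the a.s.\ finite stopping time $\tau^*$ we get $\psi(x_0)-\eta\leq\mathbb{E}^{x_0}_{S^0_\I,S_\II}[\psi(x_{\tau^*})]$. Since $\psi\leq1$ on $\overline B_{K\eps}(0)$ while $\psi\leq0$ on $\Gamma_\eps\cap\{|x|\geq1\}$, the right-hand side is at most $\mathbb{P}^{x_0}_{S^0_\I,S_\II}(|x_{\tau^*}|\leq K\eps)$; letting $\eta\to0$ and using $\psi(x_0)=1-|x_0|^{\gamma'}\geq1-2^{-\gamma'}>0$ (as $|x_0|<\tfrac12$) yields
\[
\mathbb{P}^{x_0}_{S^0_\I,S_\II}\big(\text{the token reaches }\overline B_{K\eps}(0)\text{ before leaving }B_1(0)\big)\ \geq\ 1-2^{-\gamma'}=:c_0(p,n)>0,
\]
uniformly in $S_\II$ and in all small $\eps$. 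Finally, once the token lies in $\overline B_{K\eps}(0)$, if Player~I wins the next $N_1=N_1(p,n)$ tug-of-war tosses --- an event of probability at least $(\alpha/2)^{N_1}$, whatever Player~II does, since then Player~II never moves --- she walks the token monotonically onto $0$ without leaving $B_1(0)$. Combining the last two facts via the strong Markov property proves the theorem with $c(p,n)=c_0(\alpha/2)^{N_1}$; the case $|x_0|\leq K\eps$ follows from the last step alone.

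I expect the discrete-to-continuous comparison down to scale $|x|\sim\eps$ to be the main obstacle: a crude Taylor expansion of $\psi$ degenerates near the puncture, so one must use the homogeneity of $g$ to reduce the estimate, uniformly in $\eps$, to a fixed annulus on which the strictly negative $\Delta_p g$ controls the $O(\eps^3)$ remainder. The remaining ingredients --- the submartingale property, optional stopping, and the finite-range push onto the origin --- are routine.
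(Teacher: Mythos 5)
The paper does not actually write out a proof of Theorem~\ref{thm:p>n}; it simply points to Proposition~1.1(ii) of \cite{peress08} and remarks that the argument adapts to the present game. Your proof is a correct, self-contained implementation of exactly the idea the paper alludes to: a radial barrier $1-|x|^{\gamma'}$ with $\gamma'$ taken \emph{strictly} below $(p-n)/(p-1)$ so that $\Delta_p^N(|x|^{\gamma'})<0$ strictly, which dominates the $o(\eps^2)$ error in the discrete mean-value comparison; the homogeneity of $|x|^{\gamma'}$ reduces that comparison, for every $|x|\geq K\eps$, to a single uniform estimate on a fixed annulus around the unit sphere; and optional stopping plus a final block of $N_1(p,n)$ consecutive Player~I wins (probability $\geq(\alpha/2)^{N_1}$) lands the token exactly at the origin, giving a lower bound $\big(1-2^{-\gamma'}\big)(\alpha/2)^{N_1}$ that depends only on $p$ and $n$. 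This is the fundamental-solution-plus-iteration strategy the paper cites, and your write-up supplies the details the paper leaves to the reference.
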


\begin{corollary}[Harnack]
\label{cor:harnack-p>n}
Let $p>n$ and $u_\eps$ be a positive $p$-harmonious function in $\Om$. Then
\[
\begin{split}
\sup_{B_\rho(z_0)} u_\eps \le C \inf_{B_\rho(z_0)} u_\eps
\end{split}
\]
for $B_{5\rho}(z_0)\subset \Om$ with a constant $C$ independent of $u$.
\end{corollary}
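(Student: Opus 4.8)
The strategy is to deduce the Harnack inequality for $p$-harmonious functions directly from Theorem~\ref{thm:p>n} together with the estimation tool in Lemma~\ref{lem:value-estimate}. By scaling and translation we may assume $z_0=0$ and $\rho=1$, so that $B_5(0)\subset\Om$. The goal is to bound $u_\eps(x)$ by a uniform constant times $u_\eps(y)$ for all $x,y\in B_1(0)$, and by symmetry it suffices to produce, for each such $x$, a strategy for Player~I starting from $x$ that drives the token to $y$ before leaving $B_5(0)$ with probability bounded below by a constant $c(p,n)>0$ independent of $\eps$.

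\textbf{Key steps.} First I would rescale: after replacing $u_\eps$ by the function $w(x)=u_\eps(y+2x)$ on $B_1(0)\setminus\{0\}$ (with the corresponding rescaled step size $\eps/2$), reaching $y$ for $u_\eps$ corresponds to reaching $0$ for $w$ in the punctured ball $B_1(0)\setminus\{0\}$, and staying inside $B_5(0)$ for $u_\eps$ corresponds to staying inside $B_1(0)$ for $w$; moreover the starting point $x$ with $|x-y|<2$ translates into a starting point of absolute value $<\tfrac12$, exactly the hypothesis of Theorem~\ref{thm:p>n}. Thus Theorem~\ref{thm:p>n} furnishes a strategy $S^0_\I$ for Player~I and a probability $q=q(p,n)>0$ (uniform in small $\eps$) of hitting $0$ before exiting $B_1(0)$; undoing the scaling this is a strategy, starting from $x$, of reaching $y$ before exiting $B_5(0)$ with probability at least $q$. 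Second, let $\tau^{**}$ be the stopping time at which the token either hits $y$ or first enters the $\eps$-boundary strip of $B_5(0)$; since $B_{5\rho}(z_0)\subset\Om$ we have $\tau^{**}\le\tau$, so Lemma~\ref{lem:value-estimate} applies and gives
\[
u_\eps(x)\ \ge\ \inf_{S_\II}\mathbb{E}^x_{S^0_\I,S_\II}\big[u_\eps(x_{\tau^{**}})\big]\ \ge\ q\,u_\eps(y),
\]
using positivity of $u_\eps$ to discard the contribution of the event $\{$token exits$\}$. Rearranging, $u_\eps(y)\le q^{-1}u_\eps(x)$ for all $x,y\in B_\rho(z_0)$, which upon taking $\sup$ over $y$ and $\inf$ over $x$ is precisely the asserted Harnack inequality with $C=q(p,n)^{-1}$.

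\textbf{Main obstacle.} The only real work is transporting Theorem~\ref{thm:p>n}, which is stated for the specific configuration $\Om=B_1(0)\setminus\{0\}$ with target $\{0\}$, to the generic configuration of a target point $y\in B_\rho(z_0)$ inside a ball $B_{5\rho}(z_0)$; this is handled by the affine change of variables described above, and one should check that the ratio of the radii ($5\rho$ versus the distance from $x$ to $y$, which is at most $2\rho$) leaves enough room — indeed $|x-y|<2\rho<\tfrac{5\rho}{2}$, so after rescaling by $2\rho$ centered at $y$ the starting point lies in $B_{1/2}(0)$ and the escape ball contains $B_{5/2}(0)\supset B_1(0)$, giving slack to spare. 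The restriction $\eps<\eps_0(p,n)$ is inherited from Theorem~\ref{thm:p>n} (the strategy there is only guaranteed to work for small $\eps$), and the probability lower bound $q$ depends only on $p$ and $n$ precisely because the rescaled configuration is the fixed one in that theorem. Everything else is a one-line application of the supermartingale/optional-stopping machinery already packaged in Lemma~\ref{lem:value-estimate}.
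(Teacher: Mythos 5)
Your approach is essentially the same as the paper's: invoke Theorem~\ref{thm:p>n} for a uniform lower bound on the probability of driving the token to the target point, define the stopping time at first hit of either the target or the escape ball's outer neighbourhood, and then apply Lemma~\ref{lem:value-estimate} together with positivity of $u_\eps$ to get $u_\eps(x)\ge q\, u_\eps(y)$.

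There is, however, an arithmetic slip in your scaling. With $\rho=1$ and $x,y\in B_1(0)$ one only has $|x-y|<2$, so after the dilation $w(\cdot)=u_\eps(y+2\cdot)$ the rescaled starting point $(x-y)/2$ has norm $<1$, not $<\tfrac12$ as you claim; this falls outside the hypothesis $|x_0|<\tfrac12$ of Theorem~\ref{thm:p>n} (indeed it can be arbitrarily close to the escape boundary $\partial B_1(0)$). The fix is either to dilate by $4$ instead of $2$ (then $|(x-y)/4|<\tfrac12$ and $B_4(y)\subset B_5(z_0)$ still holds), or to do as the paper does and dilate by $2r$ with $r=|x-y|$ depending on the pair, using the escape ball $B_{2r}(y)\subset B_{5\rho}(z_0)$. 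One further cosmetic difference: the paper first fixes near-extremal points via an $\eta$-approximation (since $u_\eps$ is not assumed continuous); your version, which proves $u_\eps(x)\ge q\,u_\eps(y)$ for \emph{all} pairs $x,y\in B_\rho(z_0)$ and then takes $\inf/\sup$, is a valid and slightly cleaner way to reach the same conclusion, relying on the uniformity of $q$ over the admissible rescaled starting points.
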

\begin{proof}
Let $\eta>0$, and choose a point $x\in B_\rho(z_0)$ so that
$$u_\eps(x)\le
\inf_{B_\rho(z_0)} u_\eps+\eta,$$
 and $z\in B_\rho(z_0)$ so that
 $$u(z)\ge \sup_{B_\rho(z_0)} u_\eps -\eta.$$
  Denote $r=\abs{x-z}<2\rho$. According to
Theorem~\ref{thm:p>n}, when at $x$, Player I has a strategy $S_\I^0$ such that the probability $P$ of reaching $z$ before exiting $B_{2r}(z)$ is uniformly larger than zero. We define a stopping time
\[
\begin{split}
\tau^*:=\inf\{k\,:\,x_k\in \{z\}\cup (\Rn\setminus B_{2r}(z))\}.
\end{split}
\]
By the above choices and Lemma~\ref{lem:value-estimate}, we have
\[
\begin{split}
\inf_{B_\rho(z_0)} u_\eps+\eta&\ge u_\eps(x)
\ge\inf_{S_{\II}}\mathbb{E}^{x}_{S^0_{\I},S_{\II}}(u_{\eps}(x_{\tau^*}))\ge P u_\eps(z)+(1-P)\cdot 0\\
&\ge P( \sup_{B_\rho(z_0)} u_\eps -\eta).\qedhere
\end{split}
\]
\end{proof}

It is worth noting that in this range the PDE proofs  also become easier, see Serrin~\cite{serrin64} as well as \cite{koskelamv96,manfredi94} in the case $p>n-1$. There is also an unpublished manuscript by T. Bhattacharya, which establishes Harnack's inequality by utilizing the fundamental solution in the case $p>n$.

\appendix

\section{Hitting probabilities for cylinder walk}

In this section we derive an estimate for the hitting probabilities for the 'linear part' i.e.\ the cylinder walk. The estimate is fairly standard, but we give a complete proof for the convenience of the reader.

There are two alternative arguments: The first one is to use a solution to the underlying linear PDE, see Remark~\ref{rem:alternative}. However, this approach is quite similar to that in Lemma~4.5 in \cite{manfredipr11}, and thus we have decided to use basic estimates in the stochastic analysis.

We start with properties of a standard random walk.   For consistency, we readily adopt the notation from the proof of Lemma~\ref{lemma:speed} below.
\begin{lemma}
\label{lemma:speed1} Let  $\varepsilon <1/4$ and let $\widetilde t_0,\widetilde t_1,\ldots$ denote positions in a  symmetric random walk on the real axis with steps $\pm \varepsilon $. The random walk is stopped upon reaching $\R\setminus (0,1)$,
and the associated stopping time is denoted by ${\widetilde \tau_g}$.
Then  for any $a>0$ we have
$$
\P\big( \widetilde t_{{\widetilde \tau_g}}\leq 0\big)\;\geq \; 1-(\widetilde t_0+\varepsilon)
 \quad {\rm and} \quad
\P\big({\widetilde \tau_g} \geq a\varepsilon^{-2}\big)\leq (\widetilde t_0+4\varepsilon)/a.
$$
\end{lemma}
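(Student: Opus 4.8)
The plan is to treat the two assertions separately, both by reducing to a martingale (optional stopping) argument applied to the one-dimensional lattice walk $\widetilde t_0,\widetilde t_1,\ldots$ with increments $\pm\varepsilon$ of probability $1/2$ each, stopped at $\widetilde\tau_g=\inf\{k:\widetilde t_k\notin(0,1)\}$. For the first inequality, I would use that $\widetilde t_k$ itself is a bounded martingale (bounded because the walk stays in $(-\varepsilon,1+\varepsilon)$ up to and including the stopping time). Optional stopping gives $\widetilde t_0=\E[\widetilde t_{\widetilde\tau_g}]$. Writing $q=\P(\widetilde t_{\widetilde\tau_g}\le 0)$ for the probability of exiting through the bottom, the overshoot is at most $\varepsilon$ on each side, so $\E[\widetilde t_{\widetilde\tau_g}]\le q\cdot 0 + (1-q)\cdot 1 = 1-q$ would be false in the wrong direction; instead I bound $\E[\widetilde t_{\widetilde\tau_g}]\ge q\cdot(-\varepsilon)+(1-q)\cdot 1 = 1-q-q\varepsilon\ge 1-q-\varepsilon$. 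Hence $\widetilde t_0\ge 1-q-\varepsilon$, i.e. $q\ge 1-(\widetilde t_0+\varepsilon)$, which is exactly the claim.

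For the second inequality I would use the standard variance martingale $N_k:=\widetilde t_k^2-k\varepsilon^2$, which is a martingale for the symmetric $\pm\varepsilon$ walk. Optional stopping (justified since $\widetilde\tau_g<\infty$ a.s. and $\widetilde t_k$ is bounded, while $k\wedge\widetilde\tau_g$ is integrable) gives $\widetilde t_0^2=\E[\widetilde t_{\widetilde\tau_g}^2]-\varepsilon^2\,\E[\widetilde\tau_g]$, so
\[
\varepsilon^2\,\E[\widetilde\tau_g]=\E[\widetilde t_{\widetilde\tau_g}^2]-\widetilde t_0^2\le (1+\varepsilon)^2-0\le 1+3\varepsilon,
\]
using $0<\widetilde t_0$ and $|\widetilde t_{\widetilde\tau_g}|\le 1+\varepsilon$. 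A cleaner bound adapted to the stated constant: since $\widetilde t_{\widetilde\tau_g}\in[-\varepsilon,0]\cup[1,1+\varepsilon]$, one has $\widetilde t_{\widetilde\tau_g}^2\le (1+\varepsilon)\,\widetilde t_{\widetilde\tau_g}+\varepsilon^2$ (check the two cases), hence $\varepsilon^2\E[\widetilde\tau_g]\le (1+\varepsilon)\E[\widetilde t_{\widetilde\tau_g}]+\varepsilon^2-\widetilde t_0^2\le (1+\varepsilon)\widetilde t_0+\varepsilon^2\le \widetilde t_0+2\varepsilon$ for $\varepsilon<1/4$. Then Markov's inequality gives $\P(\widetilde\tau_g\ge a\varepsilon^{-2})\le \varepsilon^2\E[\widetilde\tau_g]/a\le (\widetilde t_0+2\varepsilon)/a$, which is slightly stronger than—and in particular implies—the stated bound $(\widetilde t_0+4\varepsilon)/a$. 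I would keep a little slack in the constants so the bookkeeping is painless.

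The only points requiring genuine care are the justifications of optional stopping: for the first part the martingale $\widetilde t_k$ is uniformly bounded so there is nothing to worry about, and for the second part I would invoke the version of the optional stopping theorem valid when $\widetilde\tau_g$ has finite expectation and the martingale increments are bounded (here $|N_{k+1}-N_k|\le 2\varepsilon|\widetilde t_k|+\varepsilon^2\le C$), so the dominated/uniformly integrable hypotheses hold on the stopped sequence. The a.s.\ finiteness and integrability of $\widetilde\tau_g$ itself follows from the elementary fact that the symmetric random walk on the finite lattice $(0,1)\cap\varepsilon\Z$ exits in geometrically bounded time, which can be stated without proof or derived in one line from a Lyapunov/eigenfunction estimate. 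I expect no substantive obstacle; the main (mild) nuisance is merely tracking the $\varepsilon$-overshoot terms so that the final constants match the statement, and choosing the auxiliary quadratic comparison inequality $\widetilde t_{\widetilde\tau_g}^2\le(1+\varepsilon)\widetilde t_{\widetilde\tau_g}+\varepsilon^2$ that turns the second moment into the first moment cleanly.
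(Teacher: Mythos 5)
Your overall plan coincides with the paper's: both inequalities are proved by optional stopping applied to the two martingales $\widetilde t_k$ and $\widetilde t_k^2-k\varepsilon^2$, followed by Markov's inequality for the second claim. The first inequality is handled identically, and the martingale/overshoot bookkeeping there is correct.

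However, the ``cleaner bound adapted to the stated constant'' in your second part rests on the comparison inequality
$\widetilde t_{\widetilde\tau_g}^2\le (1+\varepsilon)\,\widetilde t_{\widetilde\tau_g}+\varepsilon^2$, and this inequality is \emph{false} on the bottom exit interval. At $\widetilde t=-\varepsilon$ the left side is $\varepsilon^2$ while the right side is $(1+\varepsilon)(-\varepsilon)+\varepsilon^2=-\varepsilon<0$, and the failure persists throughout $(-\varepsilon,-\varepsilon^2)$, which the walk can hit. Your fallback, the crude estimate $\varepsilon^2\E\widetilde\tau_g\le(1+\varepsilon)^2\le 1+3\varepsilon$, does not yield the stated bound either: it gives $\P(\widetilde\tau_g\ge a\varepsilon^{-2})\le(1+3\varepsilon)/a$, which is weaker than $(\widetilde t_0+4\varepsilon)/a$ unless $\widetilde t_0\ge 1-\varepsilon$. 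So as written the second assertion is not established.

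Two repairs, either of which keeps your plan intact. (a) Replace your comparison inequality by the correct one $\widetilde t^2\le \widetilde t+\varepsilon(1+\varepsilon)$ for $\widetilde t\in[-\varepsilon,0]\cup[1,1+\varepsilon]$: the function $\widetilde t\mapsto \widetilde t^2-\widetilde t$ is decreasing on $[-\varepsilon,0]$ and increasing on $[1,1+\varepsilon]$, and equals $\varepsilon+\varepsilon^2$ at both extreme endpoints $-\varepsilon$ and $1+\varepsilon$. Then $\varepsilon^2\E\widetilde\tau_g\le\E[\widetilde t_{\widetilde\tau_g}]+\varepsilon+\varepsilon^2-\widetilde t_0^2=\widetilde t_0+\varepsilon+\varepsilon^2-\widetilde t_0^2\le \widetilde t_0+2\varepsilon$, slightly better than needed. (b) Do as the paper does: bound $\E[\widetilde t_{\widetilde\tau_g}^2]\le p\varepsilon^2+(1-p)(1+\varepsilon)^2$ where $p=\P(\widetilde t_{\widetilde\tau_g}\le 0)$, and then substitute the lower bound $p\ge 1-\widetilde t_0-\varepsilon$ already obtained in the first part; this yields $\varepsilon^2\E\widetilde\tau_g\le\widetilde t_0+4\varepsilon$. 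The paper's route is essentially your (a) executed by way of the first-moment estimate rather than by a pointwise inequality, so the two are close in spirit, but the specific pointwise inequality you wrote needs to be fixed.
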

\begin{proof}
Observe that $\widetilde t_j$ is martingale, and so is $\widetilde t^2_j-j\varepsilon^2$ since
$$((\widetilde t_j+\varepsilon)^2+(\widetilde t_j-\varepsilon )^2)/2 =\widetilde t_j^2+\varepsilon^2 .$$
Denote $p=\P (\widetilde t_{\widetilde \tau_g} \leq 0)$ and deduce from optional stopping that
$$
\widetilde t_0=\E \widetilde t_{{\widetilde \tau_g}} \geq p(-\varepsilon) +(1-p)\cdot 1,
$$
hence $p\geq(1-\widetilde t_0)(1+\varepsilon)^{-1}\geq 1-\widetilde t_0-\varepsilon.$ In a similar vain
$$
 {\widetilde t_0}^2=({\widetilde t_0}^2-0)=\E ((\widetilde t_{\widetilde \tau_g})^2- {\widetilde \tau_g}\varepsilon^2)\leq p\varepsilon^2+(1-p)(1+\varepsilon)^2-\varepsilon^2\E {\widetilde \tau_g},
$$
 yielding
$$\E {\widetilde \tau_g} \leq\varepsilon^{-2}\big((1+\varepsilon)^{2}-p-\widetilde t_0^2)\leq  \varepsilon^{-2}(\widetilde t_0+4\varepsilon),$$
where we also utilized the above estimate for $p$.
The second statement follows from this by an application of Chebychev's inequality.
\end{proof}

Recall next  a standard  large deviations bound for a sum of i.i.d.\ random variables. Consider  i.i.d.\ symmetric  $\R^n$-valued random variables  $Y_m$, $m=1,\ldots ,N$,   that are uniformly bounded:  $|Y_m|\leq b$  for all $m$. Then
\begin{equation}
\begin{split}
\label{eq:azumakolmogorov}
\P \big(\max_{1\leq m\leq N}|Y_1+\ldots +Y_m|\ge\lambda\big)\leq 4n\exp \big( -\frac{\lambda^2}{2Nb^2}\big),\qquad \lambda>0.
\end{split}
\end{equation}
With  factor 2 instead of 4 on the right hand side  this is just Hoeffding's (or Bernstein's or Azuma's) inequality (see \cite[Section 12.2]{GS01}), and the extra factor 2 comes from combining it with L\'evy-Kolmogorov's inequality
$\P\big(\max_{1\leq m\leq N}|Y_1+\ldots +Y_m|\ge\lambda\big)\leq 2\P\big(|Y_1+\ldots +Y_N|\ge\lambda\big)$, see \cite[Lemma 1 p. 397]{shiryaev}. Most of the stated  versions deal only with a one-dimensional situation, but  by considering  each component separately just adds the  extra factor $n$ to (\ref{eq:azumakolmogorov}).

We recall the definition of the cylinder walk:
 at a point $(x_j,t_j)\in B_{2r}(0)\times[0,2r]$, where $r>0$ is fixed and $B_{2r}(0)\subset \Rn$, we step with probability $\alpha/2$ to the point $(x_j,t_{j}-\eps)$, and with $\alpha/2$ to $(x_j,t_{j}+\eps)$. With probability $\beta$ we move to the point $(x_{j+1},t_j)$, where $x_{j+1}$ is randomly chosen from the ball $B_{\eps}(x_j)$.

Next we prove Lemma~\ref{lemma:speed}, which states that
when starting the cylinder-walk at the point $(0,t)$, then the probability that the walk does not escape the cylinder through its bottom is less than
$
C(p,n)(t+\varepsilon)/r.
$
\begin{proof}[Proof of Lemma~\ref{lemma:speed}] We first reduce by scaling to the case  $r=1/2.$ The cylinder walk can be equivalently constructed by combining  three  independent random constructions: The first item is  a  'horizontal' and symmetric random walk
where $\widetilde x_0=0$ and the point $\widetilde x_{j+1}$ is chosen according to a uniform distribution in $B_\eps(x_j)\subset\R^n$. The second is a random walk equivalent to the walk considered in Lemma \ref{lemma:speed1} with positions $\widetilde t_j$ and $\widetilde t_0=t$. The third is the increasing sequence $$U_j=\sum_{m=1}^jZ_m,$$ where the $Z_m$:s are independent Bernoulli variables with $Z_m\in \{0,1\}$ and $\P (Z_m=1)=\alpha .$ Then a copy of the cylinder walk is obtained by setting for $j\geq 0$
$$
t_j=\widetilde t_{U_j}, \quad  x_j=\widetilde x_{j-U_j}.
$$

 Apply first Hoeffding's  inequality with $Y_m=Z_m-\alpha,\,\lambda=j\alpha/2,\,b=1,\,N=j$ to get
\begin{equation}
\label{eq:level-set-for-bernoulli}
\begin{split}
\P (U_j\leq \alpha j/2)  \leq  \P (|U_j-j\alpha|  \geq  j\alpha/2)\leq 2\exp(-j\alpha^2/8).
\end{split}
\end{equation}
Set
\[
\begin{split}
A:=\{ U_ j>\alpha j/2  \quad {\rm for \; all }\; j\geq  1/\varepsilon \}.
\end{split}
\]
Then summing \eqref{eq:level-set-for-bernoulli} up for all indices  in the range  $j\geq1/\varepsilon$  yields
\begin{equation}
\begin{split}
\label{eq:bernoulli}
\P(A^c)&=\P( U_ j\le \alpha j/2  \quad {\rm for \; some }\; j\geq  1/\varepsilon)\\
&\le \P(\cup_{j\ge \frac{1}{\eps}}\{ U_ j\le \alpha j/2\})\\
&\le \sum_{j\ge 1/\eps}\P (U_j\leq \alpha j/2)\le  \sum_{j\ge 1/\eps}  2\exp(-j\alpha^2/8)\le O(\eps).
\end{split}
\end{equation}

Let $\tau_g$ (resp. $\widetilde \tau_g$) stand for the first moment  when $t_j$ (resp.
$\widetilde t_j$) steps out from  (0,1), and similarly  let  $\tau_b$ (resp. $\widetilde \tau_b$) stand for first time when $|x_j|$ (resp.
$|\widetilde x_j|$) reaches  $[1,\infty)$.  Here the subindex $g$ refers to the `good' stopping,  i.e. to the situation where the walk reaches the bottom or the top of the cylinder, and $b$  to the `bad' one where the walk reaches the vertical boundary.
Consider the  event
$$
B:=A\cap \{\widetilde \tau_b\geq 1/\varepsilon\}
$$
By applying  (\ref{eq:bernoulli}) and (\ref{eq:azumakolmogorov}) it follows that
\begin{equation}\label{eq:B-estimate}
\begin{split}
\P(B^c)&\leq \P(A^c)+\P(\widetilde \tau_b< 1/\varepsilon)\\
&\le  O(\varepsilon )+4n\exp(-1/(2\eps))\\
&\le O(\varepsilon),
\end{split}
\end{equation}
where in the second inequality we estimated with $Y_m=\tilde x_m$, $\lambda=1$, $b=\eps$, and $N=1/\eps$ that  $\P(\widetilde \tau_b< 1/\varepsilon)\le \P(\max_{1\le m\le N} \abs{\tilde x_1+\ldots +\tilde x_m}\ge \lambda)\le 4n \exp(-1/(2\eps)).$

It always holds that  $ \tau_b\geq \widetilde\tau_b,$  and so
in the set $\{\tau_b\leq\tau_g\}\cap  B$ one has  $\tau_g\ge \tau_b\ge \widetilde \tau_b\geq 1/\varepsilon$.
Since  $\tau_g\ge 1/\eps$ in $\{\tau_b\leq\tau_g\}\cap  B \subset A$, we may apply the definition of $A$ to deduce
$$\widetilde\tau_g\leq  \tau_g\leq (2/\alpha)\widetilde\tau_g,$$
i.e. $\{\tau_b\leq\tau_g\}\cap  B \subset \{\widetilde\tau_g\le  (2/\alpha)\widetilde\tau_g\}.$
 Thus by Lemma \ref{lemma:speed1} and the independence of $\widetilde \tau_g$ and $\widetilde \tau_b$ we can  estimate
\[
\begin{split}
&\P (\{\tau_b\leq\tau_g\}\cap  B)
\leq\;\P (\widetilde \tau_g> \varepsilon^{-2})+\P (\widetilde\tau_b\leq(\alpha/2)\widetilde\tau_g \;\; {\rm and}\;\;  \widetilde \tau_g\leq \varepsilon^{-2})\\
\leq &\; O(t)+\sum_{\ell=1}^{\infty}
\P \big( \varepsilon^{-2}/(\ell+1)\leq \widetilde\tau_g\leq \varepsilon^{-2}/\ell\;\; {\rm and}\;\; \widetilde \tau_b\leq (2/\alpha) \varepsilon^{-2}/(\ell+1) \;\big)\\
\leq &\; O(t)+\sum_{\ell=1}^{\infty}
\P \big( \widetilde\tau_g\geq\varepsilon^{-2}/(\ell+1)\big)\; \P\big(\widetilde \tau_b\leq (2/\alpha) \varepsilon^{-2}/(\ell+1) \;\big).
\end{split}
\]
Then by using Lemma~\ref{lemma:speed1} again and estimating $\P\big(\widetilde \tau_b\leq (2/\alpha) \varepsilon^{-2}/(\ell+1) \;\big)$ by using \eqref{eq:azumakolmogorov} similarly as before, we get
\[
\begin{split}
\P (\{\tau_b\leq\tau_g\}\cap  B)&\le O(t)+ \sum_{\ell=1}^{\infty}
\big((t+4\eps)(\ell+1)\big)\big(4n\exp (-(\ell+1)\alpha/4)\big)\\
&\le O(t+\eps).
\end{split}
\]
By combining this  with (\ref{eq:B-estimate}) and Lemma \ref{lemma:speed1} we get  the desired upper bound
$$
\P (\tau_b \leq\tau_g  \;{\rm or}\; t_{\tau_g}\ge1)\leq \P (\{\tau_b\leq\tau_g\}\cap  B)+ P(B^c)+(t+\varepsilon)=O(t+\varepsilon).\qedhere
$$
\end{proof}

\begin{remark}
\label{rem:alternative}
{\rm Let us   outline an alternative proof for Lemma \ref{lemma:speed}, again assuming $r=1/2.$ Write $U:= B_1(0)\times (0,1).$ Find a solution $u\leq 1$  to a linear PDE $(p-2)u_{tt}+u_{x_1x_1}+\ldots+u_{x_n x_n}=0$ in a larger domain so that $$u(x,-\eps)=1\quad\trm{for}\quad x\in B_{1/2}(0),$$  and such that $$u\leq 0\quad\trm{on}\quad\partial U \setminus (B_1(0)\times \{0\}).$$ An explicit solution to this problem can be obtained by scaling from a harmonic function. Consider the sequence of random variables $u(x_j,t_j)$, $j=1,2,\ldots ,$ where
$(x_j,t_j)_{j\in\N}$ are the positions in the cylinder walk. Now $u$ satisfies
\begin{equation}\label{eq:almostdpp}
\begin{split}
u(x,t)=\beta \kint_{B_\eps(x)} u \ud y&+\frac{\alpha}{2}\Big(u(x,t+\eps)+u(x,t-\eps)\Big) + O(\varepsilon^3).
\end{split}
\end{equation}
This makes $M_j:=u(x_j,t_j)+cj\varepsilon^3$ a submartingale.
Hence by optional stopping, using the stopping time $\tau'$ that corresponds to  exit from $U$, we  deduce that
$$\mathbb E[u(x_{\tau'},t_{\tau'})+c\tau' \eps^3]\ge M_0=u(0,t).$$ We recall that $\mathbb E[u(x_{\tau'},t_{\tau'})]$ gives  a lower bound for the desired probability of exiting through the bottom, up to certain error. This error  arises from not stopping exactly on the boundary of $U$, but it  is  clearly of order $O(\varepsilon)$ by the smoothness of $u$.  Now $\E \tau'\leq C\varepsilon^{-2},$ as a simple variant of Lemma \ref{lemma:speed1} shows, applied just  on  the $t$-component of the walk.  Finally the result follows by combining these observations with the  inequality $u(0,t)\geq 1-c't$.
}
\end{remark}

\def\cprime{$'$} \def\cprime{$'$}


\begin{thebibliography}{MPR10b}

\bibitem[AS10]{armstrongs10} S. N. Armstrong, and C. K. Smart.
\newblock An easy proof of Jensen's theorem on the uniqueness of infinity harmonic functions.
\newblock {\em  Calc. Var. Partial Differential Equations}, 37(3--4):381-384, 2010.

\bibitem[AB10]{atarb10}R.~Atar, A.~Budhiraja.
\newblock A stochastic differential game for the inhomogeneous $\infty$-Laplace equation.
\newblock {\em  Ann. Probab.}, 382(2):498--531, 2010.



\bibitem[DeG57]{degiorgi57}
E.~De~Giorgi.
\newblock Sulla differenziabilit\`a e l'analiticit\`a delle estremali degli
  integrali multipli regolari.
\newblock {\em Mem. Accad. Sci. Torino. Cl. Sci. Fis. Mat. Nat. (3)}, 3:25--43,
  1957.

\bibitem[DiB89]{dibenedetto89}
E.~DiBenedetto.
\newblock Harnack estimates in certain function classes.
\newblock {\em Atti Sem. Mat. Fis. Univ. Modena}, 37(1):173--182, 1989.

\bibitem[GS01]{GS01}G. Grimmett, and D.R. Stirzaker
\newblock Probability and random processes.
Third edition. Oxford University, 2001.


\bibitem[JJ11]{juju11}
P.~Juutinen, and V.~Julin.
\newblock A new proof for the equivalence of weak and viscosity solutions for the $p$-Laplace equation.
\newblock Preprint.

\bibitem[JLM01]{juutinenlm01}
P.~Juutinen, P.~Lindqvist, and J.~J. Manfredi.
\newblock On the equivalence of viscosity solutions and weak solutions for a
  quasi-linear equation.
\newblock {\em SIAM J. Math. Anal.}, 33(3):699--717, 2001.

\bibitem[KMV96]{koskelamv96} P.~Koskela, and J.~J~Manfredi, and E.~ Villamor, {\it Regularity theory and traces of {${\mathcal A}$}-harmonic functions}.
\newblock {\em Trans. Amer. Math. Soc.}. 348(2):755--766, 1996.


\bibitem[LeG07]{legruyer07} E. Le Gruyer, {\it On absolutely minimizing Lipschitz
extensions and PDE $\Delta\sb \infty(u)=0$}.
\newblock {\em  NoDEA Nonlinear
Differential Equations Appl}. 14:29--55, 2007.


\bibitem[LA98]{legruyera98} E. Le Gruyer and J. C. Archer.
\newblock Harmonious extensions.
\newblock {\em SIAM J. Math. Anal}. 29:279--292, 1998.


\bibitem[MS96]{MS} A. P. Maitra and W.D. Sudderth.
\newblock Borel stochastic games with lim sup payoff.
\newblock {\em Ann. Prob.}. 21(2):861--885, 1996.

\bibitem[Mos60]{moser60}
J.~Moser.
\newblock A new proof of {D}e {G}iorgi's theorem concerning the regularity
  problem for elliptic differential equations.
\newblock {\em Comm. Pure Appl. Math.}, 13:457--468, 1960.

\bibitem[Mos61]{moser61}
J.~Moser.
\newblock On Harnack's theorem for elliptic differential equations.
\newblock {\em Comm. Pure Appl. Math.}, 14:577--591, 1961.

\bibitem[Man94]{manfredi94}
J.J.~Manfredi
\newblock Weakly monotone functions
\newblock {\em J. Geom. Anal.}, 4:393--402, 1994.

\bibitem[MPR10a]{manfredipr10}
J.J. Manfredi, M.~Parviainen, and J.D. Rossi.
\newblock An asymptotic mean value characterization for $p$-harmonic functions.
\newblock {\em Proc. Amer. Math. Soc.}, 258:713--728, 2010.

\bibitem[MPR10b]{manfredipr10c}
J.J. Manfredi, M.~Parviainen, and J.D. Rossi.
\newblock An asymptotic mean value characterization for a class of nonlinear
  parabolic equations related to tug-of-war games.
\newblock {\em SIAM J. Math. Anal.}, 42(5):2058--2081, 2010.

\bibitem[MPRc]{manfredipr10d}
J.J. Manfredi, M.~Parviainen, and J.D. Rossi.
\newblock Dynamic programming principle for tug-of-war games with noise.
\newblock {\em ESAIM Control Optim. Calc. Var.}, 18:81--90, 2012.

\bibitem[MPRd]{manfredipr11}
J.J. Manfredi, M.~Parviainen, and J.D. Rossi.
\newblock On the definition and properties of p-harmonious functions.
\newblock To appear in Ann. Sc. Norm. Super. Pisa Cl. Sci.

\bibitem[Nas58]{nash58}
J.~Nash.
\newblock Continuity of solutions of parabolic and elliptic equations.
\newblock {\em Amer. J. Math.}, 80:931--954, 1958.

\bibitem[Obe05]{oberman05} A. M. Oberman,
{\it A convergent difference scheme for the infinity {L}aplacian:
              construction of absolutely minimizing {L}ipschitz extensions.}
Math. Comp. 74(251):1217--1230, 2005.

\bibitem[PPS10]{peresps10}Y. Peres, G.~ Pete, and S.~Somersille.
\newblock Biased tug-of-war, the biased infinity Laplacian, and comparison with
 exponential cones.
\newblock {\em Calc. Var. Partial Differential Equations}  38(3--4):541--564, 2010.


\bibitem[PSSW09]{peresssw09} Y. Peres, O. Schramm, S. Sheffield and D. Wilson,
\newblock Tug-of-war and the infinity Laplacian.
\newblock {\em J. Amer. Math. Soc.}, 22:167-210, 2009.

\bibitem[PS08]{peress08}
Y.~Peres and S.~Sheffield.
\newblock Tug-of-war with noise: a game-theoretic view of the
  {$p$}-{L}aplacian.
\newblock {\em Duke Math. J.}, 145(1):91--120, 2008.

\bibitem[Ser64]{serrin64}
J. Serrin.
\newblock Local behavior of solutions of quasi-linear equations.
\newblock {\em Acta Math.}, 111:247--302, 1964. 
        



\bibitem[Shi96]{shiryaev} A.N.~Shiryaev.
\newblock Probability. 2nd edition. Springer 1996.

\end{thebibliography}
\end{document}